\DeclareMathOperator{\diag}{\mathrm{diag}}
\DeclareMathOperator{\tr}{\mathrm{tr}}
\DeclareMathOperator{\id}{\mathrm{id}}
\DeclareMathOperator{\Hom}{\mathrm{Hom}}
\DeclareMathOperator{\supp}{\mathrm{supp}}
\DeclareMathOperator{\cu}{\mathrm{Cu}}
\DeclareMathOperator{\cs}{\mathrm{C}^*}
\DeclareMathOperator{\js}{\mathcal{Z}}
\DeclareMathOperator{\aff}{\mathrm{Aff}}
\DeclareMathOperator{\rank}{\mathrm{rank}~}
\DeclareMathOperator{\lip}{\mathrm{Lip}^1}
\newcommand{\nn}{\mathbb{N}}
\newcommand{\zz}{\mathbb{Z}}
\newcommand{\rr}{\mathbb{R}}
\newcommand{\cc}{\mathbb{C}}
\newenvironment{items}{\begin{list} {$\cdot$} {\setlength{\leftmargin}{0.5cm}}}{\end{list}}
\newtheoremstyle{smallcaps}
    {3pt}                    
    {3pt}                    
    {\itshape}                   
    {}                           
    {\sc}                   
    {.}                          
    {.5em}                       
    {}  
\newtheoremstyle{smallcapsdef}
    {3pt}                    
    {3pt}                    
    {}                   
    {}                           
    {\sc}                   
    {.}                          
    {.5em}                       
    {}  
\newtheorem {theorem}{Theorem}[section]
\newtheorem {proposition}[theorem]{Proposition}
\newtheorem {corollary}[theorem]{Corollary}
\newtheorem {thm}{Theorem}
\theoremstyle {definition}
\newtheorem {definition}[theorem]{Definition}
\newtheorem {remark}[theorem]{Remark}
\numberwithin{equation}{section}
\title{Metrics on trace spaces}
\author[B.~Jacelon]{Bhishan Jacelon}
\address[B.~Jacelon]{
Institute of Mathematics of the Czech Academy of Sciences\\ \v{Z}itn\'{a} 25\\115 67 Prague 1\\Czech Republic}
\email{bjacelon@gmail.com}
\subjclass[2010]{46L05, 46L35, 49Q20}
\keywords{$\js$-stable $\cs$-algebras, unitary orbits, optimal transport}
\begin{document}

\maketitle

\begin{abstract}
This article continues the investigation of the tracial geometry of classifiable $\cs$-algebras that have real rank zero and stable rank one. Using the language of optimal transport, we describe several situations in which the distance between unitary orbits of $^*$-homomorphisms into such algebras can be computed in terms of tracial data. The domains we consider are certain (noncommutative) CW complexes, and the measurement is relative to a family of self-adjoint elements that are in a suitable sense tracially Lipschitz. As another application of the utility of this Lipschitz structure, we show how such elements can be repurposed to witness statistical features of endomorphisms in the classifiable category, in particular the tracial version of the (almost-sure) central limit theorem. 
\end{abstract}

\section{Introduction}

This article is a continuation of the work carried out in \cite{Jacelon:2014aa,Jacelon:2021wa}, which address the \emph{Weyl problem for $\cs$-algebras}. In brief, the problem is to identify classes of normal elements of appropriately regular $\cs$-algebras for which the distance between unitary orbits can be computed as the distance between measures on spectra. Typically, `regular' means at least simple, separable, unital, nuclear, $\js$-stable (where $\js$ is the Jiang--Su algebra: the unique classifiable $\cs$-algebra with the same invariant as $\cc$) and sometimes also real rank zero. Often, nuclearity can be relaxed to exactness, and $\js$-stability can be relaxed to pureness (a strictly weaker regularity property of the Cuntz semigroup, introduced in \cite{Winter:2012pi}).

The contribution of \cite{Jacelon:2021wa} was to move from the interval (that is, self-adjoint operators, which were the focus of \cite{Jacelon:2014aa}) to more general spectra (in particular, unitaries), and to an even broader range of (not necessarily planar) commutative domains. The major advancement of the present article is a foray into noncommutativity: we compute the distance between unitary orbits of $^*$-monomorphisms defined on certain one-dimensional NCCW (that is, noncommutative CW) complexes. To avoid obstructions associated with projections, we focus on domains with trivial $K$-theory, namely, prime dimension drop algebras $Z_{p,q}$ and Razak blocks $A_{n,k}$. These $\cs$-algebras are the building blocks of the algebras $\js$ \cite{Jiang:1999hb} and $\mathcal{W}$ \cite{Jacelon:2010fj}, which play central roles in the Elliott classification programme. Without the encumbrance of projections, it is reasonable to expect that the unitary distance between $^*$-monomorphisms from these building blocks into classifiable $\cs$-algebras should admit a measure theoretic computation, and this hope is indeed realized. The following is the content of Theorems~\ref{thm:jiangsu} and \ref{thm:razak}.

\begin{thm} \label{thm1}
Let $A$ be an inductive limit of either prime dimension drop algebras or Razak blocks, with nondegenerate $1$-Lipschitz connecting maps. Let $B$ be an algebraically simple, separable, exact, $\js$-stable $\cs$-algebra. Then,
\[
d_\mathcal{U}(\varphi,\psi) = W_\infty(\varphi,\psi)
\]
for every pair of nondegenerate $^*$-monomorphisms $\varphi,\psi\colon A\to B$.
\end{thm}

Here, the distance $d_\mathcal{U}(\varphi,\psi)$ is relative to a fixed family of $1$-Lipschitz elements of $A$ (see Definitions~\ref{definition:nice}~and~\ref{definition:limit}), and $W_\infty$ is the \emph{$\infty$-Wasserstein distance}, sometimes also called the \emph{optimal matching distance} (\ref{eqn:winf}). Theorem~\ref{thm1} is motivated by \cite[Theorem 4.1]{Jacelon:2014aa}, and its proof is an exercise in applied classification. More precisely, we use both the existence and uniqueness statements of \cite[Theorem 1.0.1]{Robert:2010qy} to transfer the problem from maps $A\to B$ to maps between $Z_{p,q}$'s or $A_{n,k}$'s, where the matter is settled by matching eigenvalues (see Propositions~\ref{prop:dimdrop}~and~\ref{prop:razak}).

Theorem~\ref{thm1} of course applies to single blocks, and in \S\ref{section:examples} we also supply some examples of nontrivial limits. Most interesting of these is a classifiable $\cs$-algebra $A_I$ whose space $T(A_I)$ of tracial states is a Bauer simplex with extreme boundary $\partial_e(T(A_I))$  homeomorphic to $I=[0,1]$. What is additionally noteworthy about $A_I$ is that, by construction, there is a natural metric on $\partial_e(T(A_I))=I$ (in fact, the usual Euclidean metric) and a dense family of Lipschitz observables of the trace space. In other words, the set $\{a \mid \hat a|_{\partial_e(T(A_I))} \text{ is Lipschitz}\}$ of \emph{tracially Lipschitz} elements is dense in the set of self-adjoint elements of $A_I$.

In the setting of topological dynamical systems, Lipschitz observables are useful for witnessing statistical features of chaos, in particular the (almost-sure) central limit theorem (CLT). Foundational examples of chaotic systems are mixing Anosov diffeomorphisms like Arnold's cat map. Many more are described in \cite[\S4]{Jacelon:2022wr}. To say that such a system $(X,\mu,h)$ (where $\mu$ is a Borel probability measure on a metric space $X$, and $h\colon X\to X$ is a $\mu$-preserving measurable map) satisfies the almost-sure CLT for Lipschitz observables is to mean that the following holds for every Lipschitz map $f\colon X\to\rr$. Given such an $f$, let  $\mu_f$ denote its \emph{spatial mean}
\begin{equation} \label{eqn:mean}
\mu_f = \int_X fd\mu
\end{equation}
and $\sigma_f^2$ its \emph{variance}
\begin{equation} \label{eqn:variance}
\sigma_f^2 = \lim_{n\to\infty}\frac{1}{n}\int_X\left(S_nf-n\mu_f\right)^2d\mu,
\end{equation}
where $S_kf$ is the ergodic sum
\[
S_kf=\sum_{i=0}^{k-1}f\circ h^i.
\]
For every $n\in\nn$, let $D_n$ be the normalizing constant $D_n=\sum_{k=1}^n\frac{1}{k}$. Then, the almost-sure CLT holds if, whenever $f\colon X\to\rr$ is Lipschitz with $\mu_f=0$ and  $\sigma_f^2\ne0$ (the former condition arranged by translation and the latter usually being the case), the sequence of weighted averages
\begin{equation} \label{eqn:weight}
T_n(x)=\frac{1}{D_n}\sum_{k=1}^n\frac{1}{k}\delta_{S_kf(x)/\sqrt{k}}
\end{equation}
is $w^*$-convergent to the normal distribution $\mathcal{N}_{0,\sigma_f^2}$, for $\mu$-almost-every $x\in X$.

In the context of $\cs$-dynamics, if such an $X$ is the extreme boundary of the trace space of a suitable classifiable $\cs$-algebra $A_X$, then the preserved measure $\mu$ corresponds to a trace $\tau$, and we can use classification to lift $h$ to a $\tau$-preserving endomorphism $\theta_h\colon A_X\to A_X$. The almost-sure CLT can then be interpreted tracially. In \S\ref{section:dynamics}, we apply this to one of the simplest examples of a strongly chaotic system, an expanding circle map $h\colon S^1\to S^1$ (specifically, a Pomeau--Manneville type system, used to model intermittent turbulence). The classifiable $\cs$-algebra $A_{S^1}$ of Theorem~\ref{thm2} is constructed in exactly the same way as $A_I$, but with dimension drop algebras over the interval replaced by generalized dimension drop algebras over the circle. (For a generalization of this construction, and a fuller treatment of tracially chaotic endomorphisms of classifiable $\cs$-algebras, see \cite{Jacelon:2022wr}.)

\begin{thm} \label{thm2}
There exists a simple, separable, unital, nuclear, $\js$-stable, projectionless $\cs$-algebra $A=A_{S^1}$ that has trivial tracial pairing and satisfies the UCT, such that $\partial_e(T(A))\cong S^1$ and the set $\{a \mid \hat a|_{S^1} \text{ is Lipschitz}\}$ (with respect to the geodesic metric on $S^1$) of tracially Lipschitz elements is dense in $A_{sa}$. Moreover, there is a trace $\tau_0\in T(A)$ and a $\tau_0$-preserving endomorphism $\theta$ of $A$ such that, for every tracially Lipschitz $a$ with $\tau_0(a)=0$ and $\sigma_{\hat{a}}^2>0$ on $\partial_e(T(A))$, and almost every $\tau\in\partial_e(T(A))$, the sequence of weighted averages (in the sense of (\ref{eqn:weight})) of the point masses
\[
\left\{\delta_{\frac{1}{\sqrt{k}}\tau(a+\theta(a)+\dots+\theta^{k-1}(a))}\right\}_{k=1}^n
\]
is $w^*$-convergent to $\mathcal{N}_{0,\sigma_{\hat{a}}^2}$.
\end{thm}

In fact, the usual CLT (involving convergence in distribution rather than almost-sure convergence, and in fact valid for observables that are merely H\"{o}lder continuous) also holds and can be interpreted at the level of the $\cs$-algebra. We are here presenting the almost-sure version to emphasize a result requiring observables to be suitably Lipschitz (see for example \cite[Theorem 2.19]{Chazottes:2007wg}).

The second task of this article is to answer some natural questions arising from \cite{Jacelon:2021wa}. As alluded to in \cite[Remark 4.13]{Jacelon:2021wa}, one begins to develop the feeling that optimal unitary conjugation between spectrum-sharing normal elements of classifiable $\cs$-algebras might depend on that spectrum's geometry. The idea of \emph{continuous transport}, a property satisfied by a compact path-connected metric space $(X,d)$ provided that it is in a suitable sense sufficiently uniform, is really an attempt to hone in on this intuition. Precisely, it means that any two faithful and diffuse measures can be mapped one onto the other by a homeomorphism $h$ (called a \emph{transport map}) whose distance from the identity is at most the $W_\infty$ distance (\ref{eqn:winf}) between the measures. Its utility is realized via classification, which allows us to translate continuous transport of measures into optimal unitary conjugation, yielding a version of Theorem~\ref{thm1} for $^*$-monomorphisms from $C(X)$ into tracial classifiable $\cs$-algebras of real rank zero (with some restrictions on $K$-theory and traces). The questions are:
\begin{enumerate}[(I)]
\item \label{q1} To what extent can the assumptions on $K$-theory and traces be relaxed?
\item \label{q2} What are examples of spaces $X$ with this property?
\item \label{q3} Are there $W_p$-versions of the theorem for $1\le p<\infty$?
\end{enumerate}

Here, $\{W_p\}_{p\in[1,\infty]}$ are the \emph{$p$-Wasserstein distances} (\ref{eqn:wass}); they are the titular metrics of this article. If `metric' is understood to mean one that induces the $w^*$-topology on the space $\mathcal{M}_f(X)$ of faithful Borel probability measures on $X$, then the inclusion of $W_\infty$ is justified by Proposition~\ref{prop:bottleneck}.

To answer Question~\ref{q1}, we first quantize the notion of continuous transport in the form of the transport constant $k_X$ (\ref{eqn:consant2}). While $k_X=1$ for every example $X$ shown in \cite{Jacelon:2021wa} to admit (approximate) continuous transport (such as the circle and compact convex subsets of Euclidean space), if $X$ is for example a noncircular ellipse, then $k_X>1$. As for Question~\ref{q2}, we observe in Theorem~\ref{thm:manifolds} that the arguments of \cite{Jacelon:2021wa} extend naturally to the setting of higher dimensional compact, connected Riemannian manifolds. Lie groups are particularly attractive targets, especially those like $\mathrm{U}(n)$, $\mathrm{SU}(n)$ and $\mathrm{Sp}(n)$ that have (finitely generated and) torsion-free $K$-theory, which is now our only assumption on $K_*(C(X))$ (replacing the `$K$-planarity' assumption of \cite[Theorem 4.11]{Jacelon:2021wa}). As for tracial assumptions, we take advantage of recent classification \cite{Lin:2014aa,Elliott:2016ab,Gong:2020uf} (or \cite{Carrion:wz}) to weaken the requirement that $\partial_e(T(A))$ be finite, more generally allowing that it be compact and of finite Lebesgue covering dimension. Finally, under these assumptions we answer Question~\ref{q3} when $X=[0,1]$. In this case, $W_p$ can replace $W_\infty$ as long as the unitary distance is computed with respect to the appropriate tracial Schatten $p$-norm (\ref{eqn:unitary}).

The following is the combination of Theorems~\ref{thm:bauer}~and~\ref{thm:interval}.

\begin{thm} \label{thm3}
Let $X$ be a compact, path-connected metric space such that $k_X<\infty$ and $K^*(X)$ is finitely generated and torsion free. Let $A$ be a simple, separable, unital, nuclear, $\js$-stable $\cs$-algebra of real rank zero, such that the extreme boundary $\partial_e(T(A))$ of its tracial state space is nonempty, compact and of finite Lebesgue covering dimension. Then,
\begin{equation} \label{eqn:live}
W_\infty(\varphi,\psi) \le d_\mathcal{U}(\varphi,\psi) \le k_X \cdot W_\infty(\varphi,\psi)
\end{equation}
for every pair of unital $^*$-monomorphisms $\varphi,\psi\colon C(X)\to A$ with $K_*(\varphi)=K_*(\psi)$. In the special case $X=[0,1]$, for every such $\varphi,\psi\colon C([0,1])\to A$ and for any $p\in[1,\infty]$,
\begin{equation} \label{eqn:die}
d_{\mathcal{U},p}(\varphi,\psi) = W_p(\varphi,\psi),
\end{equation}
with no assumptions about the real rank or tracial structure of $A$ needed if $p=\infty$.
\end{thm}

It should be noted that the proofs of the two parts of Theorem~\ref{thm3} are rather different. For (\ref{eqn:live}), we solve a transport problem on the space $X$ to the extent allowed by its geometry, then use classification to interpret the transport map as a conjugating unitary in the codomain algebra $A$. For the $p=\infty$ case of (\ref{eqn:die}), specialization to $X=[0,1]$ affords us access to powerful Cuntz semigroup classification that we use to solve the transport problem within $A$ directly. 

While we have spotlighted Theorem~\ref{thm1} as the principal novelty of this article, a precise treatment of it (in particular, the definition of the distances $d_\mathcal{U}$ and $W_\infty$) is somewhat technical. Since the motivation comes from the commutative setting, that is where we begin the story and also how we shape our narrative: We start with the domain $C(X)$, then collapse in spatial dimension to $X=[0,1]$, and finally expand in fibre dimension to arrive at one-dimensional NCCW complexes. These noncommutative domains are covered by the classifying Cuntz functor in the same way as is $C([0,1])$, and so the same techniques ultimately get us to both (\ref{eqn:die}) and Theorem~\ref{thm1}.

This article is therefore organized as follows. First, we give in \S\ref{section:wasserstein} the definitions and basic properties of the Wasserstein metrics $\{W_p\}_{p\in[1,\infty]}$ and the transport constant $k_X$, and we show that $k_X=1$ if $X$ is a compact, connected Riemannian manifold of dimension at least three. \S~\ref{section:bauer} contains the first part (\ref{eqn:live}) of Theorem~\ref{thm3}. The second part (\ref{eqn:die}), as well as Theorem~\ref{thm1}, is the focus of \S~\ref{section:onedim}. Finally, \S~\ref{section:dynamics} is devoted to Theorem~\ref{thm2}.

\subsection*{Acknowledgements} This research was supported by the GA\v{C}R project 20-17488Y and RVO: 67985840. I am grateful to Karen Strung and Alessandro Vignati for conversations about dimension drop algebras held during my visit to the Institut de Math\'ematiques de Jussieu-Paris Rive Gauche in November 2021, to Karmen Grizelj, Andrey Krutov and R\'{e}amonn \'{O} Buachalla for chats about Lie groups, and to the anonymous referees whose suggestions have helped improve the present exposition.

\section{The Wasserstein metrics and continuous transport} \label{section:wasserstein}

Throughout the article, $(X,d)$ is a compact, path-connected metric space. We denote by $\mathcal{M}(X)$ the set of Borel probability measures on $X$, by $\mathcal{M}_f(X)$ those measures that are faithful (that is, fully supported) and by $\mathcal{M}_g(X)$ those that are faithful and also diffuse (that is, atomless). We write
\[
\lip(X) = \{f\colon X\to\mathbb{R} \mid \forall\,x,y\in X\, (|f(x)-f(y)| \le d(x,y))\}.
\]

\subsection{The Wasserstein metrics}

Motivated by the discrete setting, the distance
\begin{equation} \label{eqn:winf}
\delta(\mu,\nu) = \inf\{r>0 \mid\forall\, U\subseteq X\text{ Borel } (\mu(U) \le \nu(U_r))\}
\end{equation}
was referred to in \cite{Jacelon:2021wa} as the \emph{optimal matching distance}. (Here, $U_r$ denotes the $r$-neighbourhood $\{x\in X \mid d(x,U)<r\}$ of $U$.) However, in the world of geometric measure theory it may be more commonly recognized as the \emph{$\infty$-Wasserstein} distance $W_\infty$ (see \cite[Proposition 5]{Givens:1984to}, from which one also obtains the symmetry of the definition). It dominates the \emph{L\'{e}vy--Prokhorov metric}
\begin{equation} \label{eqn:lp}
d_P(\mu,\nu) = \inf\{r>0 \mid\forall\, U\subseteq X\text{ Borel } (\mu(U) \le \nu(U_r) + r)\}
\end{equation}
and is the right distance to use in the study of norm-closed unitary orbits in $\cs$-algebras. Although (unlike $d_P$) $W_\infty$ yields a strictly finer topology than the $w^*$-topology on the full space of measures $\mathcal{M}(X)$, it \emph{does} give the $w^*$-topology on the measures of interest to us here (see Proposition~\ref{prop:bottleneck} below).

By contrast, for $1\le p<\infty$ the \emph{$p$-Wasserstein distance}
\begin{equation} \label{eqn:wass}
W_p(\mu,\nu) = \inf_{\pi\in\Pi(\mu,\nu)} \left(\int_{X\times X} d(x,y)^pd\pi(x,y)\right)^\frac{1}{p}
\end{equation}
(where $\Pi(\mu,\nu)$ denotes the set of Borel probability measures on $X\times X$ with marginals $\mu$ and $\nu$) \emph{does} provide a metrization of the $w^*$-topology on all of $\mathcal{M}(X)$ (see \cite[Proposition 4]{Givens:1984to}). To make use of them, however, we must replace the $\cs$-norm $\|\cdot\|=\|\cdot\|_\infty$ by the \emph{uniform tracial Schatten $p$-norm}
\begin{equation} \label{eqn:schatten}
\|a\|_p = \sup_{\tau\in T(A)} \tau(|a|^p)^{\frac{1}{p}}
\end{equation}
and measure the unitary distance between $^*$-homomorphisms $\varphi,\psi\colon C(X)\to A$ as
\begin{equation} \label{eqn:unitary}
d_{\mathcal{U},p}(\varphi,\psi) = \inf_{u\in \mathcal{U}(\tilde{A})} \sup_{f\in\lip(X)} \|u\varphi(f)u^*-\psi(f)\|_p.
\end{equation}
Correspondingly, we write
\begin{equation} \label{eqn:wp}
W_p(\varphi,\psi) = \sup_{\tau\in T(A)}W_p(\mu_{\varphi^*\tau},\mu_{\psi^*\tau})
\end{equation}
for any $p\in[1,\infty]$. Here, given a $\cs$-algebra $A$, $T(A)$ denotes the space of tracial states on $A$, $\tilde{A}$ denotes the minimal unitization of $A$, and $\mathcal{U}(\tilde{A})$ is its unitary group. Going forward, we will also denote by $\tau\mapsto\mu_\tau$, $\mu\mapsto\tau_\mu$ the natural inverse affine homeomorphisms between $T(C(X))$ and $\mathcal{M}(X)$.

\begin{remark} \label{remark:representations}
In some cases, there are other useful descriptions of $W_p$.
\begin{enumerate}[(i)]
\item \label{rep1} ($X\subseteq\mathbb{R}$, $p\in[1,\infty)$) Let $F(x)=\mu(-\infty,x]$ and $G(x)=\nu(-\infty,x]$ be the cumulative distribution functions of $\mu$ and $\nu$. Their inverses are
\[
F^{-1}(t) = \inf\{x\in X \mid F(x) > t\}, \quad G^{-1}(t) = \inf\{x\in X \mid G(x) > t\}.
\]
Then, as in for example \cite[Theorem 5.1]{Ambrosio:2003vk} (see also the description of `some famous couplings' in Chapter 1 of \cite{Villani:2009aa}) with $\varphi(x)=x^p$ and the substitution $t=F(x)$,
\[
W_p(\mu,\nu) = \left(\int_0^1 |F^{-1}(t)-G^{-1}(t)|^p dt\right)^\frac{1}{p} = \|F^{-1}(t)-G^{-1}(t)\|_p.
\]
Equivalently, in the notation of \cite[\S3]{Hiai:1989aa}, $W_p(\mu,\nu) = \|\lambda(\mu)-\lambda(\nu)\|_p$,
where for $t\in[0,1)$, $\lambda_t(\mu) = \inf\{s\in\mathbb{R} \mid \mu(s,\infty)\le t\}$ and similarly for $\lambda_t(\nu)$.

In particular, if $\mu=\frac{1}{n}\sum_{i=1}^n\delta_{x_i}$ and $\nu=\frac{1}{n}\sum_{i=1}^n\delta_{y_i}$ with $x_1<\dots<x_n$ and $y_1<\dots<y_n$, then
\[
W_p(\mu,\nu) = \left(\frac{1}{n}\sum_{i=1}^n |x_i-y_i|^p\right)^\frac{1}{p}.
\]
\item \label{rep2} ($p=1$) By the Kantorovich--Rubinstein Theorem \cite{Kantorovic:1957we},
\[
W_1(\mu,\nu) = \sup\left\{\left|\int_X fd\mu - \int_X fd\nu\right| \mid f\in\lip(X)\right\}.
\]
For the general duality formula, see \cite[Theorem 3.1]{Ambrosio:2003vk}.
\item \label{rep3} In general (see \cite[Proposition 3]{Givens:1984to}), $W_p(\mu,\nu) \le W_q(\mu,\nu)$ whenever $p \le q$, and \[\lim_{p\to\infty}W_p(\mu,\nu)=W_\infty(\mu,\nu).\]
\end{enumerate}
\end{remark}

\begin{proposition} \label{prop:bottleneck}
For every compact, connected metric space $X$, the $W_\infty$-topology coincides with the $w^*$-topology on $\mathcal{M}_f(X)$.
\end{proposition}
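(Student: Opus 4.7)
My plan is to prove the two topological inclusions separately.

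The inclusion of the $w^*$-topology in the $W_\infty$-topology holds on all of $\mathcal{M}(X)$ and does not require faithfulness: since $W_\infty \geq W_1$ and $W_1$ metrizes the $w^*$-topology on the compact space $\mathcal{M}(X)$ by Kantorovich--Rubinstein duality, any $W_\infty$-convergent sequence is $w^*$-convergent.

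For the reverse inclusion on $\mathcal{M}_f(X)$, suppose $\mu_n \to \mu$ in $w^*$ with $\mu \in \mathcal{M}_f(X)$, and fix $\varepsilon > 0$. I will show $W_\infty(\mu_n,\mu) < \varepsilon$ for all large $n$ by exhibiting a coupling supported in $\{(x,y) : d(x,y) < \varepsilon\}$. First I set up cells: cover $X$ by finitely many open balls of radius $< \varepsilon/4$ whose boundaries have $\mu$-measure zero (only countably many radii can fail this), then disjointify to obtain a Borel partition $X = V_1 \sqcup \cdots \sqcup V_N$ with $\diam V_i < \varepsilon/2$ and $\mu(\partial V_i) = 0$. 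Faithfulness of $\mu$ gives $\mu(V_i) > 0$ for every cell with non-empty interior, and the Portmanteau theorem applied to these $\mu$-continuity sets yields $\mu_n(V_i) \to \mu(V_i)$ as $n\to\infty$.

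Next I build the coupling in two stages. Within each cell $V_i$, pair the common mass $\min(\mu_n(V_i),\mu(V_i))$ by any inner coupling; such pairs travel $<\varepsilon/2$. The residual has total $\tfrac{1}{2}\sum_i|\mu_n(V_i)-\mu(V_i)|$, which tends to $0$, and must be matched across cells. Here I would choose the partition with additional geometric structure (for instance, Voronoi cells of an $\varepsilon/4$-net, so that any two relevant cells are within distance $\varepsilon/2$ of each other), use that each $\mu(V_i)$ is uniformly bounded below (by faithfulness and compactness of $X$), and invoke a finite max-flow / Hall's marriage argument to reroute the residual through neighbouring cells with every individual hop bounded by $\varepsilon$.

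The delicate step is precisely this matching argument: verifying rigorously that the finite combinatorial reroute can be realised within the $\varepsilon$-budget, uniformly for large $n$. Equivalently (Strassen), one needs $\mu_n(U) \leq \mu(U_\varepsilon)$ for all Borel $U$; the finite-cover calculation gives this with an additive slack $\eta_n \to 0$ (the Prokhorov estimate $\mu_n(U) \leq \mu(U_{\varepsilon/2}) + \eta_n$), and the role of faithfulness is precisely to furnish $\mu$-mass near the boundary of every $U$ (with $U_\varepsilon \neq X$) sufficient to absorb this slack into the $\varepsilon$-enlargement. Combining these ingredients produces a coupling concentrated on $\{d<\varepsilon\}$, so $W_\infty(\mu_n,\mu) < \varepsilon$, as required.
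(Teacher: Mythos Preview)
The paper itself gives no argument here, only a pointer to a MathOverflow answer, so there is no in-house proof to compare against. Your Strassen-based outline is the natural route and is nearly complete, but there is a real gap: you never invoke connectedness of $X$, and the result is false without it.

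Your final step asks that the Prokhorov slack $\eta_n$ be absorbed by the enlargement, i.e., that $\mu(U_\varepsilon \setminus U_{\varepsilon/2}) \geq \eta_n$ uniformly over Borel $U$ with $U_\varepsilon \neq X$. You attribute this to faithfulness, but faithfulness only gives $\mu(V) > 0$ for non-empty \emph{open} $V$; it says nothing if the annulus $U_\varepsilon \setminus U_{\varepsilon/2}$ has empty interior, or is itself empty. Connectedness is precisely what rules this out: the continuous function $x \mapsto d(x, U)$ vanishes on $\overline{U}$ and (since $U_\varepsilon \neq X$) is $\geq \varepsilon$ somewhere, so by the intermediate value property there is $y$ with $d(y, U) = 3\varepsilon/4$, and then $B(y, \varepsilon/4) \subseteq U_\varepsilon \setminus U_{\varepsilon/2}$. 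Compactness and faithfulness now give the uniform bound $\inf_{y \in X} \mu(B(y, \varepsilon/4)) > 0$, which swallows $\eta_n$ for large $n$. Without connectedness the annulus can be empty---take $X = \{0,1\}$, $U = \{0\}$, $\varepsilon < 1$---and indeed $W_\infty(\mu_n, \mu) = 1$ for every pair of distinct faithful measures on $\{0,1\}$, so weak convergence does not force $W_\infty$-convergence there. The same omission affects your cell-based approach: rerouting residual mass ``through neighbouring cells'' requires the adjacency graph of the partition to be connected, which is again where connectedness of $X$ enters. (Separately, the claim that Voronoi cells of an $\varepsilon/4$-net have ``any two relevant cells within distance $\varepsilon/2$'' only holds for \emph{adjacent} cells; the flow argument still works because in the realised coupling each unit of mass moves at most one hop.)
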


\begin{proof}
We will provide the proof for $X=[0,1]$ (with the Euclidean metric), which is what is being referred to in Remark~\ref{remark:lipschitz}(\ref{lip3}). See \cite{mathoverflow/steve:tm} for a general argument.

Let $\varepsilon\in(0,1)$ and $\mu\in\mathcal{M}_f([0,1])$. We will find $\gamma>0$ such that $W_\infty(\mu,\nu)<\varepsilon$ for every $\nu\in\mathcal{M}_f([0,1])$ with $W_1(\mu,\nu)<\gamma$. Choose a natural number $N>\frac{12}{\varepsilon}$, and define subintervals $(U_i)_{0\le i\le N+1}$ and $(V_i)_{1\le i\le N}$ by $U_0=U_{N+1}=\emptyset$, and
\[
U_i=\left[\frac{i-1}{N},\frac{i}{N}\right] \quad,\quad V_i=U_{i-1}\cup U_i \cup U_{i+1} \quad \text{for } 1\le i\le N.
\]
Set
\[
r=\frac{1}{2}\min\left\{\varepsilon,\min_{1\le 1\le N}\mu(U_i)\right\} \quad \textrm{and} \quad \gamma = \frac{r^2}{N},
\]
and let $\nu\in\mathcal{M}_f([0,1])$ with $W_1(\mu,\nu)<\gamma$. For $1\le i\le N$, let $f_i\in C([0,1])^1_+$ be an $N$-Lipschitz function supported on $V_i$ that is constantly 1 on $U_i$. Then,
\begin{equation} \label{mintrace}
\nu(V_i) \ge \int f_i d\nu > \int f_i d\mu - NW_1(\mu,\nu) > \mu(U_i) - r \ge r.
\end{equation}
Now let $U\subseteq[0,1]$ be an arbitrary open subset. By \cite[Theorem 2]{Gibbs:2002aa},
\begin{equation} \label{gibbs}
d_P(\mu,\nu)^2 \le W_1(\mu,\nu),
\end{equation}
where $d_P$ is the L\'{e}vy--Prokhorov metric, so by definition (\ref{eqn:lp}) of $d_P$,
\begin{equation} \label{prokhorov}
\mu(U) < \nu(U_r) + r \quad \textrm{ and } \quad \nu(U) < \mu(U_r) + r.
\end{equation}
If $U_\frac{\varepsilon}{2}$ intersects every $V_i$, each of which has length $<\frac{\varepsilon}{4}$, then $U_\varepsilon \supseteq (U_\frac{\varepsilon}{2})_\frac{\varepsilon}{2} = [0,1]$, and so we certainly have
\[
\nu(U) \le \mu(U_\varepsilon) \quad \textrm{ and } \quad \mu(U) \le \nu(U_\varepsilon).
\]
Otherwise, there exists $i$ such that $U_\frac{\varepsilon}{2}$ does not intersect $V_i$ but does intersect at least one of its neighbours. By (\ref{mintrace}) and (\ref{prokhorov}), we then have
\[
\nu(U_\varepsilon) \ge \nu(U_\frac{\varepsilon}{2})_\frac{\varepsilon}{2} \ge \nu(U_\frac{\varepsilon}{2}) + \nu(V_i) \ge \nu(U_\frac{\varepsilon}{2}) + r \ge  \nu(U_r) + r > \mu(U),
\]
and similarly the other way round.
\end{proof}

\begin{proposition} \label{prop:lipschitz}
Let $X$ be a compact metric space. Then, for any $1\le p\le \infty$ and $\mu,\nu\in\mathcal{M}(X)$,
\[
\sup_{f\in\lip(X)}W_p(f_*\mu,f_*\nu) \le W_p(\mu,\nu)
\]
(where $f_*\mu$ denotes the pushforward $\mu\circ f^{-1}$). Moreover, equality holds for $X=[0,1]$.
\end{proposition}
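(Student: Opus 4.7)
The plan is to prove the inequality by pushforward of couplings and then to observe that for $X=[0,1]$ equality is achieved already by $f=\id$.

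For the inequality, fix $\mu,\nu\in\mathcal{M}(X)$ and $f\in\lip(X)$. Given any Borel coupling $\pi\in\Pi(\mu,\nu)$, its image $\sigma:=(f\times f)_*\pi$ is a coupling of $f_*\mu$ and $f_*\nu$. For $p<\infty$, the change-of-variables formula together with the $1$-Lipschitz bound $|f(x)-f(y)|\le d(x,y)$ yields
\[
\int_{\mathbb{R}\times\mathbb{R}}|u-v|^p\,d\sigma(u,v)=\int_{X\times X}|f(x)-f(y)|^p\,d\pi(x,y)\le\int_{X\times X}d(x,y)^p\,d\pi(x,y).
\]
Taking $p$-th roots and infimising over $\pi$ gives $W_p(f_*\mu,f_*\nu)\le W_p(\mu,\nu)$. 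For $p=\infty$ the same reasoning applies with essential suprema: since $\sigma\{|u-v|>c\}=\pi\{(x,y):|f(x)-f(y)|>c\}$, we have $\sigma\text{-}\mathrm{ess\,sup}\,|u-v|=\pi\text{-}\mathrm{ess\,sup}\,|f(x)-f(y)|\le\pi\text{-}\mathrm{ess\,sup}\,d(x,y)$, so $W_\infty(f_*\mu,f_*\nu)\le W_\infty(\mu,\nu)$. Equivalently, using the optimal matching description from the start of this section, one checks that $(f^{-1}U)_r\subseteq f^{-1}(U_r)$ for any $1$-Lipschitz $f$ and Borel $U\subseteq\mathbb{R}$, which transfers the matching condition from $(\mu,\nu)$ to $(f_*\mu,f_*\nu)$.

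For equality when $X=[0,1]$ (with the Euclidean metric), take $f=\id\colon[0,1]\to\mathbb{R}$. This lies in $\lip([0,1])$, and $f_*\mu=\mu$, $f_*\nu=\nu$, so $W_p(f_*\mu,f_*\nu)=W_p(\mu,\nu)$; hence the supremum on the left saturates the right-hand side. The argument is essentially an exercise in unfolding definitions, so there is no substantial obstacle; the only care needed is with the $p=\infty$ case, and that is handled by either of the two standard descriptions of $W_\infty$ recalled above.
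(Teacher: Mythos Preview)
Your argument is correct and follows essentially the same route as the paper: push a coupling $\pi$ forward by $f\times f$ to obtain a coupling of $f_*\mu$ and $f_*\nu$, then use the Lipschitz bound inside the cost integral and infimise; equality for $X=[0,1]$ is witnessed by $f=\id$. The only cosmetic difference is that for $p=\infty$ the paper obtains the inequality by letting $p\to\infty$ (invoking Remark~\ref{remark:representations}(\ref{rep3})) rather than arguing directly with essential suprema or the matching description as you do.
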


\begin{proof}
Fix $\pi\in\Pi(\mu,\nu)$ and $f\in\lip(X)$. Then $(f\times f)_*\pi\in\Pi(f_*\mu,f_*\nu)$, which simply means the following: for any open set $U\subseteq f(X)$,
\begin{align*}
(f\times f)_*\pi(U\times f(X)) &= \pi((f\times f)^{-1}(U\times f(X))) &&(\text{definition of pushforward})\\
&= \pi(f^{-1}(U)\times X)\\
&= \mu(f^{-1}(U)) &&(\text{definition of } \pi\in\Pi(\mu,\nu))\\
&= f_*\mu(U) 
\end{align*}
and similarly $(f\times f)_*\pi(f(X)\times V)=f_*\nu(V)$ for any open $V\subseteq f(X)$. Hence,
\begin{align*}
W_p(f_*\mu,f_*\nu)^p &= \inf_{\gamma\in\Pi(f_*\mu,f_*\nu)} \int_{f(X)\times f(X)} |s-t|^pd\gamma(s,t)\\
&\le \int_{f(X)\times f(X)} |s-t|^pd(f\times f)_*\pi(s,t)\\
&= \int_{X\times X} |f(x)-f(y)|^pd\pi(x,y)\\
&\le \int_{X\times X} d(x,y)^pd\pi(x,y),
\end{align*}
and so $W_p(f_*\mu,f_*\nu)^p \le W_p(\mu,\nu)^p$. The case $p=\infty$ follows upon taking the limit $p\to\infty$ (see Remark~\ref{remark:representations}(\ref{rep3})) but is also proved directly in \cite[Proposition 3.4]{Jacelon:2021wa}. The second statement holds since $\id\in\lip([0,1])$.
\end{proof}

For $X=[0,1]$, the relationship between $W_p$ and $d_{\mathcal{U},p}$ is provided by Proposition~\ref{prop:wasserstein}, which describes the prototypical example of a transport map, and the following corollary of Proposition~\ref{prop:lipschitz}.

\begin{corollary} \label{corollary:interval}
\sloppy
Let $A$ be a simple, tracial, unital $\cs$-algebra, let $\varphi,\psi\colon C([0,1])\to A$ be unital $^*$-monomorphisms, and let $p\in[1,\infty]$. Then,
\[
W_p(\varphi,\psi) \le d_{\mathcal{U},p}(\varphi,\psi).
\]
\fussy
\end{corollary}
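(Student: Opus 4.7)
The plan is to reduce the inequality to a tracial spectral estimate and then invoke the one-dimensional formula from Remark \ref{remark:representations}(\ref{rep1}). Fix $\tau\in T(A)$ and $u\in\mathcal{U}(\tilde A)$. The measure $\mu_{\varphi^*\tau}$ is by definition the spectral distribution of the self-adjoint element $\varphi(\id)\in A$ under $\tau$, and this distribution is invariant under unitary conjugation, so
\[
W_p(\mu_{\varphi^*\tau},\mu_{\psi^*\tau}) = W_p(\mu_{u\varphi(\id)u^*,\tau},\mu_{\psi(\id),\tau}),
\]
where $\mu_{c,\tau}$ denotes the spectral distribution of a self-adjoint $c\in A$ with respect to $\tau$.

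The heart of the argument is the tracial stability estimate
\[
W_p(\mu_{a,\tau},\mu_{b,\tau}) \le \tau(|a-b|^p)^{1/p} \le \|a-b\|_p
\]
for self-adjoint $a,b\in A$ (with $\tau(|a-b|^p)^{1/p}$ replaced by $\|a-b\|$ when $p=\infty$). For finite $p$, Remark \ref{remark:representations}(\ref{rep1}) rewrites the left-hand side as $\|\lambda_\bullet(a,\tau)-\lambda_\bullet(b,\tau)\|_{L^p[0,1]}$ in terms of the nonincreasing rearrangements of $a$ and $b$, and the estimate reduces to the classical contractivity of rearrangement in every $L^p(\tau)$, as in \cite{Hiai:1989aa}; in our $\cs$-algebraic setting this is applied in the tracial von Neumann algebra $\pi_\tau(A)''$ produced by the GNS construction. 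The $p=\infty$ case is a well-known spectral perturbation estimate, and is in any event accessible by taking $p\to\infty$ via Remark \ref{remark:representations}(\ref{rep3}).

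Applying the tracial estimate to $a=u\varphi(\id)u^*$ and $b=\psi(\id)$, and using that $\id\in\lip([0,1])$, yields
\[
W_p(\mu_{\varphi^*\tau},\mu_{\psi^*\tau}) \le \|u\varphi(\id)u^*-\psi(\id)\|_p \le \sup_{f\in\lip([0,1])}\|u\varphi(f)u^*-\psi(f)\|_p.
\]
Taking the supremum over $\tau\in T(A)$ on the left and the infimum over $u\in\mathcal{U}(\tilde A)$ on the right delivers $W_p(\varphi,\psi)\le d_{\mathcal{U},p}(\varphi,\psi)$.

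The main obstacle is the tracial rearrangement inequality for general $p$. At $p=1$ it follows almost immediately from Kantorovich--Rubinstein duality (Remark \ref{remark:representations}(\ref{rep2})), since $|\tau(u\varphi(f)u^*-\psi(f))|\le\tau(|u\varphi(f)u^*-\psi(f)|)$ bounds each Lipschitz test functional directly; but for general $p$ one genuinely needs the operator-theoretic fact that nonincreasing rearrangement is a contraction in each $L^p(\tau)$, which is what forces the detour through $\pi_\tau(A)''$.
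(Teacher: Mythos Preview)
Your proof is correct and follows essentially the same route as the paper's. Both arguments reduce to the Hiai--Nakamura rearrangement inequality in $L^p$ of a tracial von Neumann algebra (cited in the paper as \cite[Theorem~4.3]{Hiai:1989aa}) applied in the GNS completion $\pi_\tau(A)''$, together with the observation that $\id\in\lip([0,1])$---which is precisely the content of the equality case of Proposition~\ref{prop:lipschitz} that the paper invokes.
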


\begin{proof}
The proof is the same as that of \cite[Corollary 3.6]{Jacelon:2021wa}, using Proposition~\ref{prop:lipschitz} instead of \cite[Proposition 3.4]{Jacelon:2021wa}, and \cite[Theorem 4.3]{Hiai:1989aa} instead of \cite[Theorem 2.1]{Hiai:1989aa} for the required version of \cite[Lemma 3.3(ii)]{Jacelon:2021wa} (stated for positive rather than commuting normal elements).
\end{proof}

\begin{proposition} \label{prop:wasserstein}
Let $\mu,\nu\in\mathcal{M}_g([0,1])$ with cumulative distribution functions $F,G$, and let $p\in[1,\infty]$. Then, the increasing rearrangement homeomorphism $h=F^{-1}\circ G\colon [0,1]\to[0,1]$ satisfies $h_*(\nu)=\mu$ and $\|h-\id\|_p \le W_p(\mu,\nu)$, where the $p$-norm is taken in $L^p([0,1],\nu)$.
\end{proposition}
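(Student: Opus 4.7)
The plan is to exploit the fact that $\mu, \nu \in \mathcal{M}_g([0,1])$ are faithful and diffuse, so their cumulative distribution functions $F, G$ are continuous and strictly increasing maps from $[0,1]$ to $[0,1]$ (with $F(0)=G(0)=0$ and $F(1)=G(1)=1$), hence homeomorphisms. In particular $h = F^{-1} \circ G$ is well-defined and is itself an (increasing) homeomorphism of $[0,1]$.

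The first claim $h_*\nu = \mu$ can be checked on the generating family of intervals $[0,x]$. Since $h$ is increasing, $h^{-1}([0,x]) = [0, G^{-1}(F(x))]$, so
\[
h_*\nu([0,x]) = \nu\bigl([0, G^{-1}(F(x))]\bigr) = G\bigl(G^{-1}(F(x))\bigr) = F(x) = \mu([0,x]),
\]
and equality of the Borel measures $h_*\nu$ and $\mu$ follows.

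For the norm estimate, the key observation is the change-of-variables identity $G_*\nu = \lambda$, Lebesgue measure on $[0,1]$: indeed, for $t \in [0,1]$, $G_*\nu([0,t]) = \nu([0, G^{-1}(t)]) = G(G^{-1}(t)) = t$. For $1 \le p < \infty$, writing $t = G(x)$ gives
\[
\|h - \id\|_{L^p([0,1],\nu)}^p = \int_0^1 |F^{-1}(G(x)) - x|^p\, d\nu(x) = \int_0^1 |F^{-1}(t) - G^{-1}(t)|^p\, dt,
\]
which by Remark~\ref{remark:representations}(\ref{rep1}) equals $W_p(\mu,\nu)^p$. For $p = \infty$, since $h$ and $\id$ are continuous and $\supp(\nu) = [0,1]$, the essential supremum with respect to $\nu$ coincides with the usual supremum; as $G$ is a homeomorphism, the substitution $t = G(x)$ again converts $\sup_x |F^{-1}(G(x)) - x|$ into $\sup_t |F^{-1}(t) - G^{-1}(t)| = W_\infty(\mu,\nu)$ (using Remark~\ref{remark:representations}(\ref{rep1}) together with Remark~\ref{remark:representations}(\ref{rep3}), or equivalently the $\lambda_t$ description).

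No step here is really an obstacle — the proof is essentially a direct unwinding of definitions combined with the classical identification of $W_p$ for measures on the line with the $L^p$ distance between their quantile functions. The only minor subtlety is confirming the change of variables for $p = \infty$, which is why faithfulness of $\nu$ is used (to match the $L^\infty(\nu)$ norm with the sup norm of the continuous function $h - \id$). In fact the argument yields equality $\|h - \id\|_p = W_p(\mu,\nu)$, which is stronger than the stated inequality.
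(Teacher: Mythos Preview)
Your argument is correct and is precisely the computation underlying the references the paper cites: the paper's proof simply points to \cite[Theorem~5.1]{Ambrosio:2003vk} (and \cite[Proposition~2.2]{Jacelon:2021wa} for $p=\infty$), while you spell out the change of variables $t=G(x)$ that reduces $\|h-\id\|_{L^p(\nu)}$ to the quantile expression $\|F^{-1}-G^{-1}\|_{L^p(dt)}$ already recorded in Remark~\ref{remark:representations}(\ref{rep1}). Your observation that one in fact obtains equality $\|h-\id\|_p = W_p(\mu,\nu)$ is also correct.
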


\begin{proof}
See \cite[Theorem 5.1]{Ambrosio:2003vk}, and also \cite[Proposition 2.2]{Jacelon:2021wa} (for the $p=\infty$ case). 
\end{proof}

\subsection{Continuous transport}

\begin{definition} \label{def:transport}
Given $\mu,\nu\in\mathcal{M}(X)$ and $\varepsilon>0$, write $\mathcal{H}(\nu,\mu,\varepsilon)$ for the set of homeomorphisms $h\colon X\to X$ with $W_\infty(h_*\nu,\mu)<\varepsilon$ (where $h_*\nu$ is the pushforward measure $\nu\circ h^{-1}$). Say that $X$ \emph{approximately admits continuous transport} if, for every $\mu,\nu\in\mathcal{M}_g(X)$ and $\varepsilon>0$, there exists $h\in\mathcal{H}(\nu,\mu,\varepsilon)$ such that
\[
d(h,\id)=\sup_{x\in X}d(h(x),x)<W_\infty(\mu,\nu)+\varepsilon.
\]
Equivalently, the value of the constant
\begin{equation} \label{eqn:constant1}
c_X = \sup_{\nu\ne\mu\in\mathcal{M}_g(X)}\sup_{\varepsilon>0}\inf_{h\in\mathcal{H}(\nu,\mu,\varepsilon)}\frac{d(h,\id)}{W_\infty(\mu,\nu)}
\end{equation}
is $1$. For applications, we would like the transport homeomorphisms $h$ to be trivial on $K$-theory, so let us write $k_X$ for the potentially larger \emph{transport constant}
\begin{equation} \label{eqn:consant2}
k_X = \sup_{\nu\ne\mu\in\mathcal{M}_g(X)}\sup_{\varepsilon>0}\inf_{h\in\mathcal{H}_0(\nu,\mu,\varepsilon)}\frac{d(h,\id)}{W_\infty(\mu,\nu)},
\end{equation}
where $\mathcal{H}_0(\nu,\mu,\varepsilon)=\{h\in\mathcal{H}(\nu,\mu,\varepsilon) \mid h \,\text{is homotopic to}\,\id\}$.
\end{definition}

The transport constant is most likely to provide meaningful information about $(X,d)$ when the metric agrees with the intrinsic one (given by the infimum of the lengths of paths joining points). This is the case for the Riemannian manifolds considered below, but not necessarily for curves embedded in the plane with the inherited Euclidean metric (which are of particular interest as they represent potential spectra of normal elements of $\cs$-algebras). As an example, a circular arc $X$ has $k_X=1$ if $X$ is smaller than a semicircle, but $k_X$ grows as $X$ approaches the full circle (because the $W_\infty$ distance between measures concentrated at the endpoints of the arc converges to $0$, while the distance between an associated transport map and the identity does not). That said, in the real rank zero setting a much broader range of spectra $X\subseteq\mathbb{C}$ is accessible; see for example \cite{Hu:2015aa} and \cite[Theorem 4.10]{Jacelon:2021wa} (which allows for arbitrary Peano continua, and can be generalized to the class of $\cs$-algebras considered in Theorem~\ref{thm:bauer} below).

\sloppy
\begin{remark}
\begin{enumerate}[(i)]
\item It is immediate from the definition that $c_X\ge 1$ for any $X$, and by the Oxtoby--Ulam Theorem \cite{Oxtoby:1941aa}, $c_X<\infty$ if $X$ is a topological manifold.
\item The same value of $c_X$ or $k_X$ is obtained whether one takes $p=\infty$ or any other $p\in[1,\infty]$ in the definition of $\mathcal{H}(\nu,\mu,\varepsilon)$, because all of the Wasserstein metrics $W_p$ are topologically equivalent on $\mathcal{M}_f(X)$ (see Proposition~\ref{prop:bottleneck}).
\item For the purposes of this article, it is usually enough for the transport map $h$ to be a continuous surjection, or in other words for the induced map $h^*\colon C(X)\to C(X)$ to be injective. This guarantees that $\psi\circ h^*\colon C(X)\to A$ is a $^*$-monomorphism whenever $\psi\colon C(X)\to A$ is. That said, in each example we consider it is no more difficult to in fact obtain a bijection, so we have chosen to incorporate this into the definition.
\item The $W_\infty$-closure of the set of finitely supported measures on $X$ includes all of $\mathcal{M}_f(X)$. (See \cite[Lemma 2.3]{Jacelon:2021wa}, which is stated for $\mathcal{M}_g(X)$ but whose proof does not actually use diffuseness.) In most examples where we are able to compute $c_X$ or $k_X$, we do so by approximating $\mu$ and $\nu$ by $\mu'=\frac{1}{n}\sum_{i=1}^n\delta_{x_i}$ and $\nu'=\frac{1}{n}\sum_{i=1}^n\delta_{y_i}$, exhibiting a homeomorphism $h$ with $h_*\nu'=\mu'$, and observing an \emph{a priori} bound for the Lipschitz constant $L$ of $h$ in terms of $\inf_{\sigma\in S_n}d(x_i,y_{\sigma(i)})=W_\infty(\mu',\nu')\approx W_\infty(\mu,\nu)$. The estimate
\begin{align*}
W_\infty(h_*\nu,\mu) &\le W_\infty(h_*\nu,h_*\nu') + W_\infty(h_*\nu',\mu') + W_\infty(\mu',\mu) \\
&\le L\cdot W_\infty(\nu,\nu') + W_\infty(\mu',\mu)
\end{align*}
allows us to conclude that $h\in\mathcal{H}(\nu,\mu,\varepsilon)$. This is the strategy employed in Proposition 2.5 and Theorem 2.13 of \cite{Jacelon:2021wa}, and implicitly in Theorem~\ref{thm:manifolds} below.
\end{enumerate}
\end{remark}
\fussy

In \cite[Theorem 2.13]{Jacelon:2021wa}, we showed that a compact convex subset $X$ of Euclidean space has transport constant $c_X=k_X=1$. The key property of such a space used in the proof is that points can be joined by paths witnessing the distance between them (that is, straight lines). Riemannian manifolds also enjoy this property.

\begin{theorem} \label{thm:manifolds}
Let $X$ be a compact, connected Riemannian manifold of dimension $\ge 3$ equipped with its intrinsic metric $d$. Then, $k_X=1$, that is, for every $\mu,\nu\in\mathcal{M}_g(X)$ and every $\varepsilon>0$, there exists a homeomorphism $h\colon X\to X$ homotopic to the identity such that $W_\infty(h_*\nu,\mu)<\varepsilon$ and $d(h,\id)<W_\infty(\mu,\nu)+\varepsilon$.
\end{theorem}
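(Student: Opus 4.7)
The plan is to follow the strategy sketched in Remark~2.1(iii), with the straight line segments used in the proof of \cite[Theorem 2.13]{Jacelon:2021wa} replaced by (nearly) minimizing geodesics of $X$. The dimension hypothesis will enter through a standard general position argument.

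First I would approximate $\mu$ and $\nu$ in $W_\infty$ by uniform measures $\mu'=\frac{1}{n}\sum_{i=1}^n\delta_{x_i}$ and $\nu'=\frac{1}{n}\sum_{i=1}^n\delta_{y_i}$ such that the $2n$ points $\{x_i\}\cup\{y_i\}$ are distinct and, after relabelling the $x_i$'s so as to realise the optimal matching, $W_\infty(\mu',\nu')=\max_i d(x_i,y_i)=:r$ satisfies $|r-W_\infty(\mu,\nu)|<\varepsilon/3$. For each $i$, select a minimizing geodesic $\gamma_i$ from $y_i$ to $x_i$, of length $d(y_i,x_i)\le r$. Because $\dim X\ge 3$, a transversality argument (two smooth $1$-dimensional arcs in a manifold of dimension at least three generically miss each other) lets me perturb the $\gamma_i$ into smoothly embedded, pairwise disjoint arcs $\tilde\gamma_i$ still joining $y_i$ to $x_i$, of length at most $r+\delta$ for any prescribed $\delta>0$.

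I would then construct $h$ as the time-$1$ map of a smooth isotopy $(h_t)_{t\in[0,1]}$ supported in pairwise disjoint tubular neighbourhoods $T_i$ of the $\tilde\gamma_i$ of some common radius $\rho>0$. Concretely, each $h_t|_{T_i}$ would be generated by a smooth extension of $\tilde\gamma_i'$ to $T_i$, multiplied by a bump function equal to $1$ on an inner core and vanishing near $\partial T_i$, rescaled so that $h_1(y_i)=x_i$. The resulting $h$ is a diffeomorphism of $X$, isotopic and hence homotopic to $\id$, satisfies $h_*\nu'=\mu'$, and obeys $d(h,\id)\le r+\delta+2\rho$: points outside $\bigsqcup_i T_i$ do not move, and each flow orbit inside $T_i$ is a curve of length at most $r+\delta$ staying within $\rho$ of $\tilde\gamma_i$. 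To transfer the conclusion back to $\mu,\nu$, I would invoke Remark~2.1(iii): the diffeomorphism $h$ has some finite Lipschitz constant $L=L(\rho,\delta,n,X)$, and refining the initial approximations so that $W_\infty(\mu,\mu'),W_\infty(\nu,\nu')<\varepsilon/(3L)$ yields $W_\infty(h_*\nu,\mu)<\varepsilon$, while choosing $\delta,\rho$ small ensures $d(h,\id)<W_\infty(\mu,\nu)+\varepsilon$.

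The main obstacle I foresee is the general position step. In dimension two, a matching coming from the discrete $W_\infty$ problem can force any family of connecting arcs to cross (consider, e.g., swapping two pairs of points across a bottleneck); a homeomorphism realising such a swap must displace points by substantially more than the geodesic lengths, which is precisely why the hypothesis $\dim X\ge 3$ is used. Once disjointness is secured, the rest is routine: the tube-and-bump construction produces a diffeomorphism with the required displacement bound, and the triangle-inequality argument of Remark~2.1(iii) propagates the estimate from $(\mu',\nu')$ to $(\mu,\nu)$.
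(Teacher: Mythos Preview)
Your plan is correct and matches the paper's own proof essentially line for line: approximate by finitely supported measures (\cite[Lemma~2.3]{Jacelon:2021wa}), replace straight segments by length-minimizing geodesics (Hopf--Rinow), use $\dim X\ge 3$ to make the arcs pairwise disjoint by general position, and build $h$ as the identity outside disjoint tubular neighbourhoods of these arcs so that it is isotopic to $\id$. The paper's argument is simply a pointer to \cite[Proposition~2.8]{Jacelon:2021wa} with this geodesic substitution, and your discussion of why dimension two fails is exactly the content of Remark~\ref{remark:twodim}.
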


\begin{proof}
This follows from \cite[Lemma 2.3]{Jacelon:2021wa} (which shows $W_\infty$-density of finitely supported measures) and the argument of \cite[Propositions 2.8]{Jacelon:2021wa} (which shows how to transport one finitely supported measure onto another). The only difference is that straight lines are replaced by length-minimizing geodesics, which exist by the Hopf--Rinow Theorem (see \cite[\S5.3]{Carmo:1976um}). The constructed homeomorphism $h$ is equal to the identity except within finitely many disjoint tubular neighbourhoods of paths, within which $h$ may be continuously deformed to the identity.
\end{proof}

\begin{remark} \label{remark:twodim}
The case $\dim X=2$ is rather different, because we lack the extra dimension to locally perturb intersecting geodesics into nonintersecting paths. That said, the argument of \cite[Propositions 2.9]{Jacelon:2021wa} does carry over to measures on the sphere $S^2$ (equipped with spherical distance), and also on surfaces of nonzero genus provided that the $W_\infty$ distance is small compared to the surface's systole (that is, the length of the shortest closed homotopically nontrivial geodesic).
\end{remark}

\begin{remark} \label{remark:lie}
Theorem~\ref{thm:manifolds} applies in particular to compact, connected Lie groups $G$ (see for example \cite[Theorem 3.8]{Arvanitoyeorgos:2003vq}; in the semisimple case the metric is provided by the Killing form). Lie groups have many properties that make them especially attractive manifolds in the context of this article. In particular, if $\pi_1(G)$ is torsion free (for example, if $G$ is simply connected), then $K^*(G)$ is torsion free. (If $G$ is also semisimple, $K^*(G)$ is in fact isomorphic as a Hopf algebra over the integers to the exterior algebra generated by the $K^1$-classes of the fundamental representations of $G$; see \cite{Hodgkin:1967kq}). Homogeneous spaces $G/K$ associated to such a group $G$ also have torsion free $K$-groups (see \cite{Minami:1975wx}). This simplifies the analysis of $^*$-homomorphisms from $C(X)$ in the sense that $KL(C(X),A)$ (discussed for example in \cite[\S2]{Matui:2011uq}) becomes simply $\Hom(K_*(C(X)),K_*(A))$ for any $\sigma$-unital $\cs$-algebra $A$.
\end{remark}

\section{Optimal unitary conjugation} \label{section:bauer}

In this section, we relax some of the tracial and $K$-theoretic constraints imposed on the codomains $A$ considered in \cite[Theorem 4.11]{Jacelon:2021wa}, and potentially allow for transport constants $k_X>1$. This would apply for example to ellipses $X\subseteq\mathbb{C}$, although Theorem~\ref{thm:bauer} might not be optimal in this case.

\begin{theorem} \label{thm:bauer}
Let $X$ be a compact, path-connected metric space such that $k_X<\infty$ and $K^*(X)$ is finitely generated and torsion free. Let $A$ be a simple, separable, unital, nuclear $\js$-stable $\cs$-algebra of real rank zero, such that the extreme boundary $\partial_e(T(A))$ of its tracial state space is nonempty, compact and of finite Lebesgue covering dimension. Then,
\[
W_\infty(\varphi,\psi) \le d_\mathcal{U}(\varphi,\psi) \le k_X \cdot W_\infty(\varphi,\psi)
\]
for every pair of unital $^*$-monomorphisms $\varphi,\psi\colon C(X)\to A$ with $K_*(\varphi)=K_*(\psi)$.
\end{theorem}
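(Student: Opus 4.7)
The statement splits into a standard lower bound and the main upper bound.

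For $W_\infty(\varphi,\psi)\le d_\mathcal{U}(\varphi,\psi)$, I would reproduce the Hiai-type argument of \cite[Corollary 3.6]{Jacelon:2021wa}. Fix $\tau\in T(A)$, $u\in\mathcal{U}(\tilde A)$, and $f\in\lip(X)$. Since $\tau$ is tracial, $u\varphi(f)u^*$ has the same $\tau$-spectral distribution $f_*\mu_{\varphi^*\tau}$ as $\varphi(f)$, and \cite[Theorem 4.3]{Hiai:1989aa} applied to the self-adjoints $u\varphi(f)u^*$ and $\psi(f)$ gives
\[
W_\infty(f_*\mu_{\varphi^*\tau},f_*\mu_{\psi^*\tau})\le\|u\varphi(f)u^*-\psi(f)\|.
\]
To recover the full $W_\infty$, complement Proposition~\ref{prop:lipschitz} with the elementary observation that for any $r<W_\infty(\mu,\nu)$ with witnessing Borel set $U$ (so $\mu(U)>\nu(U_r)$), the $1$-Lipschitz function $f=d(\cdot,U)$ satisfies $W_\infty(f_*\mu,f_*\nu)>r$; thus $W_\infty(\mu,\nu)=\sup_{f\in\lip(X)}W_\infty(f_*\mu,f_*\nu)$. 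Taking suprema over $f$ and $\tau$, and infimum over $u$, yields the lower bound.

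For $d_\mathcal{U}(\varphi,\psi)\le k_X\cdot W_\infty(\varphi,\psi)$, fix $\varepsilon>0$. First, using real rank zero together with $\mathcal{Z}$-stability and simplicity of $A$, I would perturb $\varphi$ and $\psi$ (in $\cs$-norm $<\varepsilon$) so that for every $\tau\in\partial_e T(A)$ one has $\mu_{\varphi^*\tau},\mu_{\psi^*\tau}\in\mathcal{M}_g(X)$; faithfulness is automatic from injectivity, and atoms can be smoothed away since $A$ has a rich supply of projections in each hereditary subalgebra. Next, the pointwise definition of $k_X$ delivers, for each such $\tau$, a homeomorphism $h_\tau\colon X\to X$ homotopic to $\id$ with $W_\infty((h_\tau)_*\mu_{\psi^*\tau},\mu_{\varphi^*\tau})<\varepsilon$ and $d(h_\tau,\id)<k_XW_\infty(\mu_{\varphi^*\tau},\mu_{\psi^*\tau})+\varepsilon$. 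The first technical step is to promote these data to a single $h$ working uniformly across $\partial_e T(A)$: compactness and finite covering dimension of $\partial_e T(A)$, together with the $W_\infty$-continuity of $\tau\mapsto\mu_{\varphi^*\tau},\mu_{\psi^*\tau}$ on $\mathcal{M}_f(X)$ (Proposition~\ref{prop:bottleneck}), allow a finite cover plus partition-of-unity selection argument, producing $h\colon X\to X$ homotopic to $\id$ with $d(h,\id)<k_XW_\infty(\varphi,\psi)+2\varepsilon$ and the approximate transport identity holding for every extreme $\tau$.

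Now $\chi:=\varphi\circ h^*\colon C(X)\to A$ satisfies $KL(\chi)=KL(\varphi)=KL(\psi)$ (since $h$ is homotopic to $\id$) and has tracial data $W_\infty$-close to that of $\psi$ at every extreme trace, hence at every trace (affinely extending over the Bauer simplex). The torsion-free assumption on $K^*(X)$ reduces $KL(C(X),A)$ to $\Hom(K_*(C(X)),K_*(A))$ (Remark~\ref{remark:lie}), so the existence half of the classification of unital $^*$-monomorphisms into $A$ up to approximate unitary equivalence—available under the stated hypotheses on $A$, generalising \cite[Theorem 4.11]{Jacelon:2021wa}—produces $u\in\mathcal{U}(\tilde A)$ with $\|u\chi(f)u^*-\psi(f)\|<\varepsilon$ for every $f\in\lip(X)$. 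Then for any such $f$, using that $f$ is $1$-Lipschitz,
\[
\|u\varphi(f)u^*-\psi(f)\|\le\|\varphi(f)-\varphi(h^*f)\|+\varepsilon\le\|f-f\circ h\|_\infty+\varepsilon\le d(h,\id)+\varepsilon\le k_XW_\infty(\varphi,\psi)+3\varepsilon.
\]
Taking the supremum over $f$, infimum over $u$, and letting $\varepsilon\to 0$ completes the argument.

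The principal obstacle is the invocation of the existence half of classification under the relaxed tracial and $K$-theoretic hypotheses on $A$; a close second is the construction of the single homeomorphism $h$ operating uniformly across $\partial_e T(A)$, which genuinely requires both compactness and finite covering dimension of the extreme boundary. Everything else is routine.
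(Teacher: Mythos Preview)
Your lower-bound argument is fine, and your distance-to-set observation $\sup_{f\in\lip(X)}W_\infty(f_*\mu,f_*\nu)=W_\infty(\mu,\nu)$ is a clean way to extend Proposition~\ref{prop:lipschitz} beyond $X=[0,1]$.

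The upper-bound argument has a genuine gap at precisely the step you flag as ``a close second'': the single homeomorphism $h$ does not exist in general, and no partition-of-unity selection on $\partial_eT(A)$ can produce one. Homeomorphisms of $X$ do not average, and more to the point the constraints are typically incompatible: take $\partial_eT(A)=\{\tau_1,\tau_2\}$ with $\mu_{\psi^*\tau_1}=\mu_{\psi^*\tau_2}=\nu$ but $\mu_{\varphi^*\tau_1}\ne\mu_{\varphi^*\tau_2}$ (easily arranged, since $\psi(C(X))$ is a proper commutative subalgebra of $A$). Then $h_*\nu$ would have to approximate two distinct measures at once. The transport must therefore be allowed to vary across $\partial_eT(A)$, and this is exactly where real rank zero and the dimension bound are actually used in the paper's proof. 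One first perturbs so that $\varphi^*,\psi^*$ factor through finite convex combinations $\tau\mapsto\sum_if_i(\tau)\tau_{\mu_i}$ (resp.\ $\tau_{\nu_i}$) indexed by a partition of unity on $\partial_eT(A)$ with overlap $\le\dim\partial_eT(A)+1$; real rank zero then supplies orthogonal projections $p_i$ tracially matching the $f_i$, and classification replaces $\psi$ by a \emph{block-diagonal} map $\bigoplus_i\psi_i$ with $\psi_i\colon C(X)\to p_iAp_i$ and $\psi_i^*\equiv\tau_{\nu_i}$. Each block is transported by its own $h_i$, one sets $\varphi'=\bigoplus_i\psi_i\circ h_i^*$, and the key estimate
\[
\|\varphi'(f)-\psi(f)\|=\max_i\|\psi_i(f\circ h_i)-\psi_i(f)\|\le\max_i d(h_i,\id)
\]
works because $\varphi'$ and $\psi$ have commuting images. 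Your final displayed inequality is the $m=1$ shadow of this; the diagonalisation is not optional but is what replaces the impossible single $h$ by a tuple $(h_1,\dots,h_m)$ acting on orthogonal corners. (By contrast, the classification input you list as the principal obstacle is available as a black box from \cite{Carrion:wz}.)
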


\begin{proof}
The overall strategy of proof is the same as that of \cite[Theorem 4.11]{Jacelon:2021wa}. As in \cite[Proposition 4.9]{Jacelon:2021wa}, we approximately diagonalize $\psi$:
\begin{equation} \label{eqn:diagonal}
\begin{tikzcd}
C(X) \arrow[rr,dashed,"\psi' "] \arrow[dr,"\bigoplus_{i=1}^m\psi_i"] & & A\\
& \bigoplus_{i=1}^m p_iAp_i \arrow[ur, hook] & 
\end{tikzcd}
\end{equation}
with the projections $p_i$ corresponding to a suitable partition of unity on $\partial_e(T(A))$ and the measures $\mu_{\psi_i^*\tau}$ faithful and diffuse for every $\tau\in T(A)$ (as in \cite[Proposition 4.7]{Jacelon:2021wa}). These measures are then transported to their $\varphi$-counterparts to the extent that the space $X$ allows (represented by the constant $k_X$). Finally, classification delivers the required conjugating unitary.

That said, the present increased level of generality does introduce technical subtleties that must be addressed, especially in the diagonalization step. For completeness, we include full details at least up to that step, then indicate how to conclude the argument from there.

Since $A$ is nuclear (hence exact) and has real rank zero, the state space of the simple ordered (weakly unperforated) abelian group $(K_0(A),K_0(A)_+,[1_A])$ is affinely homeomorphic to $T(A)$, so in particular is a metrizable Choquet simplex. By \cite[Proposition 5.8]{Lin:2011ub}, $(K_0(A),K_0(A)_+,[1_A])$ therefore has the `rationally Riesz' property, and so by the range result \cite[Theorem 6.8]{Lin:2011ub} combined with the classification of unital, simple, separable, nuclear, $\mathcal{Z}$-stable $\cs$-algebras satisfying the UCT (see \cite[Theorem 29.8]{Gong:2020uf} and \cite[Theorem 4.9]{Elliott:2016ab}), $A$ has rational tracial rank at most one. We can then apply (the approximate version of) \cite[Corollary 5.4]{Lin:2014aa} to $A$ and $C=C(X)$, noting that, since $K_*(C(X))$ is assumed to be finitely generated and torsion free, any appearance of $KL$ in this theorem can be replaced by $K_*$. (We could alternatively appeal to the classification of maps into sequence algebras presented in \cite{Carrion:wz}, which also allows us to avoid mention of the UCT in the statement of Theorem~\ref{thm:bauer}.)
 
Let $\varepsilon>0$. The theorem provides us with $\delta\in(0,1)$ and a finite set $G\subseteq C(X)_+^1$ such that unital $^*$-monomorphisms $C(X)\to A$ that induce the same homomorphism $K_*(C(X))\to K_*(A)$, and tracially agree on $G$ up to $\delta$, are approximately unitarily conjugate on $\lip(X)$ up to $\frac{\varepsilon}{2k_X}\le\frac{\varepsilon}{2}$. 

\underline{Step 1}: perturbation. Since $F=(\varphi\cup\psi)(G)$ and $K=\partial_e(T(A))$ are compact, there is an open cover $\{U_i\}_{i=1}^m$ of $K$ such that
\begin{enumerate}[(i)]
\item \begin{equation}\label{eqn:small1} \max\limits_{1\le i\le m} \sup\limits_{\tau,\tau'\in U_i} \sup\limits_{a\in F} |\tau(a)-\tau'(a)| < \frac{\delta}{2};\end{equation}
\item $\bigcap\limits_{i\in I} U_i=\emptyset$ for any index set $I\subseteq\{1,\ldots,m\}$ of size $>d+1$ (where $d=\dim(K)$).
\end{enumerate}
Let $\{f_i\}_{i=1}^m$ be a partition of unity on $K$, and $\{\tau_i\}_{i=1}^m$ traces in $K$, with $\tau_i\in f_i^{-1}(\{1\})\subseteq\supp(f_i)\subseteq U_i$ for $1\le i \le m$. For each $i$, find $\mu_i,\nu_i\in\mathcal{M}_g(X)$ with
\begin{equation} \label{eqn:small2}
\sup_{f\in G}\max\{|\tau_{\mu_i}(f)-\tau_i(\varphi(f))|, |\tau_{\nu_i}(f)-\tau_i(\psi(f))|\} < \frac{\delta}{2}.
\end{equation}
By \cite[Theorem \rm{II}.3.12]{Alfsen:1971hl}, we may extend the functions $f_i$ to continuous affine maps $T(A)\to[0,1]$, and then define $\lambda_1,\lambda_2\colon T(A)\to T(C(X))$ by
\begin{equation} \label{eqn:pou}
\lambda_1(\tau) = \sum_{i=1}^m f_i(\tau)\tau_{\mu_i}, \quad \lambda_2(\tau) = \sum_{i=1}^m f_i(\tau)\tau_{\nu_i}.
\end{equation}
By \cite[Theorem 2.6]{Matui:2011uq}, there are unital $^*$-monomorphisms $\varphi',\psi'\colon C(X)\to A$ such that $KL(\varphi')=KL(\varphi)$ and $\tau\circ\varphi'=\lambda_1(\tau)$ for every $\tau\in T(A)$, and similarly for $\psi'$, $\psi$ and $\lambda_2$. For $f\in G$ and $\tau\in K$ we have by (\ref{eqn:small1}) and (\ref{eqn:small2}) that
\begin{align*}
|\tau(\varphi'(f)) - \tau(\varphi(f))| &= \left|\sum_{i=1}^m f_i(\tau)\tau_{\mu_i}(f) - \sum_{i=1}^m f_i(\tau)\tau(\varphi(f))\right|\\
&\le \sum_{\{i\mid \tau\in U_i\}} (|\tau_{\mu_i}(f)-\tau_i(\varphi(f))| + |\tau_i(\varphi(f))-\tau(\varphi(f))|)f_i(\tau)\\
&< \delta,
\end{align*}
so $\varphi'$ and $\varphi$ are unitarily conjugate on $\lip(X)$ up to $\varepsilon$ (and similarly for $\psi'$ and $\psi$). In conclusion, we may assume that $\varphi^*, \psi^*\colon T(A)\to T(C(X))$ are given by (\ref{eqn:pou}); in particular, $\mu_{\varphi^*\tau}, \mu_{\psi^*\tau}\in\mathcal{M}_g(X)$ for every $\tau\in K$.

\underline{Step 2}: diagonalization. Let $q_1,\dots,q_l\in M_k(C(X))$ be projections that generate $K_0(C(X))$, and for each $j$, let $q'_j=\psi(q_j)\in M_k(A)$ and let $r_j$ be the common value of $\tau(q_j)$ for all (non-normalized) traces $\tau$ on $M_k(C(X))$ (that is, $r_j=\rank(q_j)$). Set $r=\min_{1\le j\le l}r_j$, and let $\gamma>0$ be small enough such that
\[
\gamma < \min\left\{ \frac{\delta}{4(d+1)}, \frac{r}{3k}-\frac{r^2}{(3k)^2}, \frac{r}{4km^3} \right\}.
\]
By \cite[Lemma 3.16]{Bosa:aa} and \cite[\S1.3(ii)]{Castillejos:2021wm}, together with central surjectivity \cite[Lemma 1.8]{Castillejos:2021wm} and the fact that $A$ has real rank zero, there exist projections $p_1,\ldots,p_m$ in $A$ such that $p_1+\cdots+p_m=1$,
\begin{equation} \label{eqn:proj}
\max_{1\le i\le m}\sup_{\tau\in K}|\tau(p_i)-f_i(\tau)| < \gamma
\end{equation}
and
\begin{equation} \label{eqn:commute}
\max_{1\le i\le m}\max_{1\le j \le l}\|[p'_i,q'_j]\|< \gamma
\end{equation}
(where $p'_i=p_i\otimes1_k$). In particular, for every $1\le i \le m$ and $1\le j \le l$, by (\ref{eqn:commute})  there is a projection $q_{j,i}\in p_i'M_k(A)p_i'\subseteq M_k(p_iAp_i)$ with $\|p_i'q_j'p_i'-q_{i,j}\|<\frac{r}{3k}$. Then, using (\ref{eqn:commute}) again,
\begin{align*}
\left\|q'_j - \sum_{i=1}^mq_{i,j}\right\| &= \left\|\left(\sum_{i=1}^mp'_i\right)q'_j\left(\sum_{i=1}^mp'_i\right) - \sum_{i=1}^mq_{i,j}\right\|\\
&< \left\|\sum_{i=1}^mp'_iq'_jp'_i - \sum_{i=1}^mq_{i,j}\right\| +m^2 \cdot \frac{r}{4km^3}\\
&< \max_{1\le i\le m}\|p'_iq'_jp'_i - q_{i,j}\| + \frac{1}{2m}\\
&< \frac{r}{3k} + \frac{1}{2m}\\
&< 1.
\end{align*}
This implies that for each $j$, $[\psi(q_j)] = K_0(\iota_1)[q_{1,j}]+\dots+K_0(\iota_m)[q_{m,j}]$ in $K_0(A)$, where $\iota_i\colon p_iAp_i\hookrightarrow A$ are the inclusion maps. Moreover, we can choose each $q_{j,i}$ to be nonzero. Otherwise, if $q_{j,i}=0$ for some $i$ and $j$, then
\begin{align*}
r_j = \tau_i\circ\psi(q_j) = \tau_i(q'_{j}) & < \tau_i\left( \sum_{t\neq i}p'_tq'_jp'_t \right) + k\left( m^2\gamma + \|p_i'q_j'p_i'\| \right)\\
&< k\left( (1-\tau_i(p_i)) +\frac{m^2r}{4km^3} + \frac{r}{3k} \right)\\
&< \frac{r}{3} + \frac{r}{3} +\frac{r}{3} = r \le r_j.
\end{align*}
\sloppy
Another application of \cite[Theorem 2.6]{Matui:2011uq} gives unital $^*$-monomorphisms $\psi_i\colon C(X)\to p_iAp_i$ such that
\begin{items}
\item $K_1(\psi_1)=K_1(\psi)$ (under the isomorphism $K_1(\iota_1)\colon K_1(p_1Ap_1)\to K_1(A)$ induced by inclusion) and $K_1(\psi_i)=0$ for $i>1$;
\item $\psi_i^*$ maps every tracial state on $p_iAp_i$ to $\tau_{\nu_i}$;
\item $K_0(\psi_i)([q_j])=[q_{i,j}]$ for every $j$, so that $K_0(\iota_1\circ\psi_1) + \dots + K_0(\iota_m\circ\psi_m) = K_0(\psi)$.
\end{items}
\fussy
Let $\psi'=\iota_1\circ\psi_1+\cdots+\iota_m\circ\psi_m$. Then $KL(\psi')=KL(\psi)$, and for $g\in G$ and $\tau\in K$ we have by (\ref{eqn:pou}) and (\ref{eqn:proj}) that
\begin{align*}
|\tau(\psi'(g)) - \tau(\psi(g))| & = \left|\sum_{i=1}^m\tau(p_i)\tau_{\nu_i}(g) - \sum_{i=1}^m f_i(\tau)\tau_{\nu_i}(g)\right|\\
&\le \sum_{\{i\mid \tau\notin U_i\}}\tau(p_i) + \sum_{\{i\mid \tau\in U_i\}}|\tau(p_i)-f_i(\tau)|\tau_{\nu_i}(g)\\
&< \left(1-\sum_{\{i\mid \tau\in U_i\}}\tau(p_i)\right)+\frac{\delta}{4}\\
&\le \sum_{\{i\mid \tau\in U_i\}}|f_i(\tau)-\tau(p_i)|+\frac{\delta}{4}\\
&< \frac{\delta}{2},
\end{align*}
so $\psi'$ and $\psi$ are unitarily conjugate on $\lip(X)$ up to $\varepsilon$. In conclusion, we may assume that $\psi$ has the diagonal factorization (\ref{eqn:diagonal}).

\sloppy
\underline{Step 3}: transportation. By assumption, there exist homeomorphisms $h_i\colon X\to X$ homotopic to the identity such that
\begin{equation} \label{eqn:transport}
\max_{1\le i\le m}\sup_{x\in X}d(h_i(x),x) < k_XW_\infty(\mu_i,\nu_i)+\frac{\varepsilon}{2}
\end{equation}
and
\begin{equation} \label{eqn:weak}
\max_{1\le i\le m}\sup_{g\in G}\left|\tau_{(h_i)_*\nu_i}(g)-\tau_{\mu_i}(g)\right| < \frac{\delta}{2}.
\end{equation}

Let $\varphi'=\bigoplus_{i=1}^m(\iota_i\circ\psi_i\circ h_i^*)\colon C(X)\to A$. Then $KL(\varphi')=KL(\psi)=KL(\varphi)$, and using (\ref{eqn:weak}) and the same estimate as in Step 2, one checks that
\[
|\tau(\varphi'(g)) - \tau(\varphi(g))| < \delta
\]
for every $g\in G$ and $\tau\in K$, so that $d_\mathcal{U}(\varphi',\varphi)<\frac{\varepsilon}{2k_X}$. Hence, by \cite[Proposition 3.4]{Jacelon:2021wa}, $W_\infty(\varphi',\varphi)<\frac{\varepsilon}{2k_X}$ too. By the same string of inequalities that concludes the proof of \cite[Theorem 4.11]{Jacelon:2021wa}, except using (\ref{eqn:transport}) instead of \cite[(4.3)]{Jacelon:2021wa}, we obtain
\begin{align*}
d_\mathcal{U}(\varphi,\psi) &\le d_\mathcal{U}(\varphi',\psi) + \frac{\varepsilon}{2}\\
&\le k_XW_\infty(\varphi,\psi) + \varepsilon\\
&\le k_XW_\infty(\varphi',\psi) + \frac{3\varepsilon}{2}\\
&\le k_Xd_\mathcal{U}(\varphi,\psi) + 2\varepsilon,
\end{align*}
and therefore the desired inequality.
\fussy
\end{proof}

\begin{remark} \label{remark:bauercomments}
\begin{enumerate}[(i)]
\item Theorem~\ref{thm:bauer} applies to, for example, $X=\mathrm{U}(n)$, $X=\mathrm{SU}(n)$ and $X=\mathrm{Sp}(n)$ (see Remark~\ref{remark:lie}). In these cases, $k_X$=1, so we obtain equality of $d_\mathcal{U}$ and $W_\infty$.
\item There is actually no need to restrict to \emph{real-valued} Lipschitz functions in the definition (\ref{eqn:unitary}) of $d_\mathcal{U}$ and the proof of Theorem~\ref{thm:bauer}. (Indeed, allowing for arbitrary Lipschitz functions $X\to\mathbb{C}$ is how one deduces \cite[Corollary 4.12]{Jacelon:2021wa} from \cite[Theorem 4.11]{Jacelon:2021wa}.) However, this restriction will allow us to obtain a version of the theorem for $X=[0,1]$ and $p\in[1,\infty)$ (see Theorem~\ref{thm:interval}). We will also see that for $X=[0,1]$ and $p=\infty$, all restrictions on the trace space can be removed.
\end{enumerate}
\end{remark}

\section{One-dimensional NCCW complexes} \label{section:onedim}

\begin{definition}
A \emph{one-dimensional NCCW complex} is a pullback of the form
\[
A = \{(f,g) \in C([0,1],F)\oplus E \mid f(0) = \alpha_0(g), f(1) = \alpha_1(g)\}
\]
for finite dimensional $\cs$-algebras $E$ and $F$ and $^*$-homomorphisms $\alpha_0,\alpha_1:E\to F$ (which we will always assume provide an injective map $\alpha=\alpha_0\oplus\alpha_1\colon E\to F\oplus F$). Examples include \emph{dimension drop algebras} (see \S\ref{subsection:dimdrop}) and \emph{Razak blocks} (see \S\ref{subsection:razak}).

The components of $E=\bigoplus_{j=1}^kM_{i_j}$ correspond to \emph{points at infinity}, which we will denote by $\{\infty_i\}_{i=1}^k$.
\end{definition}

Every one-dimensional NCCW complex $A$ is semiprojective (see \cite[Theorem 6.2.2]{Eilers:1998yu}), which implies for us that a $^*$-homomorphism from $A$ into an inductive limit can be approximated by a $^*$-homomorphism into a finite stage (see \cite[Lemma 3.7]{Loring:1993kq}). We will use this property in the proofs of Theorem~\ref{thm:jiangsu} and Theorem~\ref{thm:razak}. 

\begin{definition} \label{definition:diagonal}
\sloppy
Let $A\subseteq C([0,1],M_n)$ and $B\subseteq C([0,1],M_m)$ be one-dimensional NCCW complexes. Following \cite{Jacelon:2021uc}, we call a $^*$-homomorphism $\varphi\colon A\to B$ \emph{diagonal} if $m=ln$ for some $l\in\mathbb{N}$, and there are continuous maps $\xi_1,\dots,\xi_l\colon [0,1]\to[0,1]$ and unitaries $u(t)\in M_m$, $t\in[0,1]$, such that  $\xi_i\leq\xi_{i+1}$ for all $i$, and 
\[
\varphi(f)(t) = u(t)\diag(f(\xi_1(t)),\ldots,f(\xi_l(t)))u(t)^* \text{ for all } f\in A,\,t\in [0,1].
\] 
The maps $\{\xi_i\}$ are said to be \emph{associated} to $\varphi$. We define the \emph{diagonal distance} between $\varphi$ and $\psi$, with associated maps $\{\xi_i^\varphi\}$ and $\{\xi_i^\psi\}$ respectively, to be
\begin{equation}
d_\partial(\varphi,\psi) = \sup_{t\in[0,1]}\max_{1\le i\le l}|\xi_i^\varphi(t)-\xi_i^\psi(t)|.
\end{equation}
\fussy
\end{definition}

Recall (cf.\ \cite[Definition 3.1]{Jacelon:2021wa}) that the Cuntz distance between normal elements $a$ and $b$ in a $\cs$-algebra $A$ is
\[
d_W(a,b) = \sup_{ U\subseteq[0,\infty)\text{ open }} \inf\{r>0 \mid f_U(a) \precsim f_{U_r}(b) \, , \, f_U(b) \precsim f_{U_r}(a)\},
\]
where $f_O$ denotes a(ny) positive continuous function with open support $O$, and $U_r$ is the $r$-neighbourhood of $U$. By Hall's Marriage Theorem, if $a$ and $b$ are normal matrices in $M_n$, then $d_W(a,b)$ is equal to the optimal matching distance between their eigenvalues.

\begin{definition} \label{definition:nice}
\sloppy
For the purposes of this section, let us say that a one-dimensional NCCW complex $A$ is \emph{nice} if
\begin{items}
\item $A$ is supported on one line, that is, $F$ is a single matrix algebra; and
\item  the spectrum of $A$ (under the hull-kernel topology) is Hausdorff, that is, $\alpha_0$ and $\alpha_1$ are each supported on a single component of $E$, or equivalently, the representations $\pi_t$ converge as $t\to0$ or $t\to1$ to single points at infinity.
\end{items}
Given a nice one-dimensional NCCW complex $A\subseteq C([0,1],M_n)$, let $\lip(A)$ be the compact set of contractive self-adjoint functions
\[
\lip(A) = \{f \in A^1_{sa} \mid \forall\,x,y\in [0,1]\, (\|f(x)-f(y)\| \le |x-y|)\}.
\]
We will measure the unitary and Cuntz distances between $^*$-monomorphisms $\varphi$ and $\psi $ from $A$ to a $\cs$-algebra $B$ relative to what we will call a \emph{Lipschitz family}: a subset
\begin{equation} \label{eqn:lipfamily}
\mathcal{L} = u\left(\lip(u^*Au)\right)u^* \subseteq A,
\end{equation}
for $u\in C([0,1],M_n)$ a continuous path of unitaries between permutation matrices $u(0)$ and $u(1)$. We define these distances as
\begin{equation} \label{eqn:du}
d_\mathcal{U}(\varphi,\psi) = \sup_{F\subseteq\mathcal{L}\,\text{finite }}\inf_{v\in\mathcal{U}(\tilde B)}\sup_{f\in F}\|v\varphi(f)v^*-\psi(f)\|
\end{equation}
and
\begin{equation} \label{eqn:dw}
d_W(\varphi,\psi) = \sup_{f\in\mathcal{L}} d_W(\varphi(f),\psi(f))
\end{equation}
respectively. The tracial distances $W_p(\varphi,\psi)$, for $1\le p\le \infty$, are defined to extend the equivalent definition in the commutative case provided by Proposition~\ref{prop:lipschitz}, that is, we equip $T(A)$ with the metric
\begin{equation} \label{eqn:metric}
W_p(\tau_1,\tau_2) = \sup_{f\in\mathcal{L}} W_p(\mu_{\tau_1\circ\iota_f},\mu_{\tau_2\circ\iota_f}),
\end{equation}
where $\iota_f\colon\sigma(f)\to A_{n,k}$ denotes the Gelfand $^*$-homomorphism, and define
\begin{equation} \label{eqn:wpp}
W_p(\varphi,\psi) = \sup_{\tau\in T(B)}W_p(\tau\circ\varphi,\tau\circ\psi).
\end{equation}
\fussy
\end{definition}

\begin{remark} \label{remark:lipschitz}
\sloppy
\begin{enumerate}[(i)]
\item \label{lip1} The role of the unitary $u$ in Definition~\ref{definition:nice} is to accommodate the one that appears in Definition~\ref{definition:diagonal}. We would like a diagonal $^*$-homomorphism whose associated maps are $1$-Lipschitz to have the ability to map one Lipschitz family into another (see Definition~\ref{definition:limit}).
\item \label{lip2} A Lipschitz family $\mathcal{L}$ associated to $A$ generates its Cuntz semigroup, so is a reasonable reference family for distance measurement. (In fact, by a suitable version of Stone-Weierstrass such as \cite[Corollary 1]{Prolla:1994td}, $\bigcup_{n\in\mathbb{N}}n\mathcal{L}$ is norm dense in $A$. Since $\mathcal{L}$ is closed under cutdowns $f \mapsto (f-\varepsilon)_+$, it can then be deduced that every Cuntz class in $\cu(A)$ is the supremum of an increasing sequence from $\mathcal{L}$.) In particular, if $A$ is a Razak block or is unital and $K_1(A)=0$, then by the results of \cite{Robert:2010qy}, for any stable rank one $\cs$-algebra $B$ and any $^*$-homomorphisms $\varphi,\psi\colon A\to B$, if $d_W(\varphi,\psi)=0$, then $\cu(\varphi)=\cu(\psi)$, and so $\varphi$ and $\psi$ are approximately unitarily equivalent.
\item \label{lip3} The definition of the metrics $W_p$ is motivated by \cite{Rieffel:1999aa}: for $A=C([0,1],M_n)$ and $p=1$, (\ref{eqn:metric}) is equivalent to
\[
W_1(\tau_1,\tau_2) = \sup_{a\in\mathcal{L}}|\tau_1(a)-\tau_2(a)|,
\]
which one should compare with the metric $\rho_L$ found in \cite[\S2]{Rieffel:1999aa}. In general, by the Kantorovich--Rubinstein Theorem (see Remark~\ref{remark:representations}),
\[
\sup_{a\in\mathcal{L}}|\tau_1(a)-\tau_2(a)| \le W_1(\tau_1,\tau_2) = \sup_{a\in\mathcal{L}, h\in\lip(C_0(0,1])}|\tau_1(h(a))-\tau_2(h(a))|,
\]
which implies that $W_1$ induces the $w^*$-topology on $T(A)$. Consequently, so does $W_p$ for every $p\in[1,\infty)$, and by Proposition~\ref{prop:bottleneck}, so does $W_\infty$ when restricted to the set of \emph{faithful} traces on $A$, provided that $A$ has no nontrivial projections. Moreover, for every $a\in\mathcal{L}$, the function $\hat{a}\colon T(A)\to\mathbb{R}$, $\tau\mapsto\tau(a)$ is in $\lip(T(A),W_1)$, hence in $\lip(T(A),W_p)$ for every $p\in[1,\infty]$.
\item \label{lip4} If $A$ has no nontrivial projections and $B$ is a simple, exact $\cs$-algebra with strict comparison, then $d_W(\varphi,\psi)=W_\infty(\varphi,\psi)$ (see \cite[Lemma 3.3 (iii),(iv)]{Jacelon:2021wa}).
\item \label{lip5} If $A$ is a nice one-dimensional NCCW complex such as $C([0,1],M_n)$ or a dimension drop algebra
\[
Z_{p,q} = \{f\in C([0,1],M_{p}\otimes M_q)\mid f(0) \in M_p\otimes 1_q,\,f(1) \in 1_p\otimes M_q \}
\]
or a Razak block
\begin{eqnarray*}
A_{n,k}=\{f\in C([0,1],M_{nk})\mid \exists a\in M_k(f(0)=&\diag(\underbrace{a}_\text{n}),\\ f(1)=&\diag(\underbrace{a}_\text{n-1},\underbrace{0}_k))\}
\end{eqnarray*}
we will benefit from the existence of an element $g_\mathcal{L}\in\mathcal{L}$ that has the property that, for every $l\in\mathbb{N}$ and points $s_1\le\dots \le s_l$ and $t_1\le\dots \le t_l$ in $[0,1]$,
\[
d_W(\diag(g_\mathcal{L}(s_1),\dots,g_\mathcal{L}(s_l)),\diag(g_\mathcal{L}(t_1),\dots,g_\mathcal{L}(t_l)))=\max_{1\le i\le l}|s_i-t_i|.
\]
In the first two example cases, we may take $g_\mathcal{L}(t)=\diag(\underbrace{t}_n)$, while for a Razak block $A_{n,k}$, this role is fulfilled by $g_\mathcal{L}(t)=u(t)u(1)^*\diag(\underbrace{1}_{(n-1)k},\underbrace{1-t}_k)u(1)u(t)^*$.
\end{enumerate}
\fussy
\end{remark}

\begin{proposition} \label{prop:diagonal}
\sloppy
Let $A\subseteq C([0,1],M_n)$ and $B\subseteq C([0,1],M_m)$ be nice one-dimensional NCCW complexes, and let $\varphi,\psi\colon A\to B$ be diagonal $^*$-monomorphisms. Then, relative to any Lipschitz family $\mathcal{L}\subseteq A$ containing an element $g_\mathcal{L}$ as above,
\[
d_\partial(\varphi,\psi) \le d_W(\varphi,\psi) \le d_\mathcal{U}(\varphi,\psi).
\]
\fussy
\end{proposition}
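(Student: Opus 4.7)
The plan is to prove the two inequalities separately. The second, $d_W(\varphi,\psi)\le d_\mathcal{U}(\varphi,\psi)$, follows quickly from generalities about the Cuntz distance; the first, $d_\partial(\varphi,\psi)\le d_W(\varphi,\psi)$, is the substantive content of the statement and pivots on evaluating at the distinguished element $g_\mathcal{L}$.

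For the second inequality, fix $f\in\mathcal{L}$ and $\varepsilon>d_\mathcal{U}(\varphi,\psi)$. Applying the definition (\ref{eqn:du}) to the singleton $F=\{f\}$ yields a unitary $u\in\mathcal{U}(\tilde B)$ with $\|u\varphi(f)u^*-\psi(f)\|<\varepsilon$. Since $\varphi(f)$ and $\psi(f)$ are self-adjoint, the Cuntz distance is invariant under unitary conjugation and dominated by the norm distance (as in \cite[Lemma 3.3(i)]{Jacelon:2021wa}), so
\[
d_W(\varphi(f),\psi(f))=d_W(u\varphi(f)u^*,\psi(f))\le\|u\varphi(f)u^*-\psi(f)\|<\varepsilon.
\]
Taking the supremum over $f\in\mathcal{L}$ and letting $\varepsilon\searrow d_\mathcal{U}(\varphi,\psi)$ gives the desired bound.

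For the first inequality, the strategy is to evaluate at $g_\mathcal{L}\in\mathcal{L}$ and exploit that it detects matchings. At each $t\in[0,1]$, the matrix $\varphi(g_\mathcal{L})(t)\in M_m$ is unitarily conjugate (via the unitary $u^\varphi(t)$ associated to $\varphi$) to $\diag(g_\mathcal{L}(\xi_1^\varphi(t)),\ldots,g_\mathcal{L}(\xi_l^\varphi(t)))$, and similarly for $\psi$. Since Cuntz distance in $M_m$ is an invariant of spectra (by Hall's marriage theorem, as recalled just before Definition~\ref{definition:nice}), the orderings $\xi_i^\varphi\le\xi_{i+1}^\varphi$ and $\xi_i^\psi\le\xi_{i+1}^\psi$, combined with the defining property of $g_\mathcal{L}$ in Remark~\ref{remark:lipschitz}(\ref{lip5}), yield
\[
d_W(\varphi(g_\mathcal{L})(t),\psi(g_\mathcal{L})(t))=\max_{1\le i\le l}|\xi_i^\varphi(t)-\xi_i^\psi(t)|.
\]
Every point evaluation $\pi_t\colon B\to M_m$ is a $*$-homomorphism, hence preserves the Cuntz preorder, which gives $d_W(\pi_t(a),\pi_t(b))\le d_W(a,b)$ for all positive $a,b\in B$. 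Combining these estimates and taking the supremum in $t$,
\[
d_\partial(\varphi,\psi)=\sup_{t\in[0,1]}d_W(\pi_t\varphi(g_\mathcal{L}),\pi_t\psi(g_\mathcal{L}))\le d_W(\varphi(g_\mathcal{L}),\psi(g_\mathcal{L}))\le d_W(\varphi,\psi),
\]
the last step because $g_\mathcal{L}\in\mathcal{L}$.

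The main subtle point is the pointwise-to-global inequality for $d_W$: it reduces to the observation that if $f_U(a)\precsim f_{U_r}(b)$ in $B$ with a Cuntz witness $c$, then $\pi_t(c)$ witnesses $f_U(\pi_t(a))\precsim f_{U_r}(\pi_t(b))$ in $M_m$ by functional calculus, so the same $r$ works at every $t$. A secondary but harmless point is that the unitary paths $u^\varphi$ and $u^\psi$ underlying the diagonal structures of $\varphi$ and $\psi$ are a priori unrelated; this plays no role, since $d_W$ in $M_m$ is independent of the particular unitary representatives involved.
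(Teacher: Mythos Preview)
Your proof is correct and follows essentially the same route as the paper's: both establish $d_\partial\le d_W$ by evaluating at the distinguished element $g_\mathcal{L}$ and using that point evaluations $\pi_t$ contract the Cuntz distance, and both obtain $d_W\le d_\mathcal{U}$ from unitary invariance together with the norm bound on $d_W$ (the paper cites \cite[Lemma~1]{Robert:2010rz}, supplemented by \cite{Cheong:2015aa,Toms:2009uq} to cover arbitrary open sets, where you cite \cite[Lemma~3.3(i)]{Jacelon:2021wa}). The only organizational difference is that the paper runs the argument as a single chain of inequalities passing through $\sup_{f\in\mathcal{L}}\sup_t d_W(\varphi(f)(t),\psi(f)(t))$, whereas you treat the two inequalities separately and skip that intermediate term---a harmless streamlining.
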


\begin{proof}
Let $\{\xi_i^\varphi\}_{1\le i\le l}$ and $\{\xi_i^\psi\}_{1\le i\le l}$ be the maps associated to $\varphi$ and $\psi$. Then,
\begin{align*}
d_\partial(\varphi,\psi) &= \sup_{t\in[0,1]}\max_{1\le i\le l}|\xi_i^\varphi(t)-\xi_i^\psi(t)|\\
&= \sup_{t\in[0,1]}d_W(\varphi(g_\mathcal{L})(t),\psi(g_\mathcal{L})(t))\\
&\le \sup_{f\in\mathcal{L}}\sup_{t\in[0,1]}d_W(\varphi(f)(t),\psi(f)(t))\\
&\le \sup_{f\in\mathcal{L}}d_W(\varphi(f),\psi(f)) &&(\:=d_W(\varphi,\psi))\\
&\le \sup_{f\in\mathcal{L}}d_\mathcal{U}(\varphi(f),\psi(f)) &&\text{(by \cite[Lemma 1]{Robert:2010rz})}\\
&\le d_\mathcal{U}(\varphi,\psi) &&\text{(by definition)}. \qedhere
\end{align*}
\end{proof}

It should be noted that, while \cite[Lemma 1]{Robert:2010rz} is stated for positive elements, the same proof works for elements that are self adjoint, provided that one allows for negative values of $t$. In addition, while the definition of $d_W$ that appears in \cite{Robert:2010rz} involves only those open sets $U$ of the form $(r,\infty)$, the inequality $d_W\le d_\mathcal{U}$ that we used in the proof of Proposition~\ref{prop:diagonal} remains true for the \emph{a priori} larger distance of Definition~\ref{definition:nice}. This follows from \cite[Lemma 1]{Cheong:2015aa}, together with \cite[Theorem 4.6]{Toms:2009uq}.

\begin{definition} \label{definition:limit}
By a \emph{$1$-Lipschitz system} we mean a direct system $(A_i,\mathcal{L}_i,\varphi_i)_{i\in\mathbb{N}}$, where each $A_i$ is a nice one-dimensional NCCW complex with Lipschitz family $\mathcal{L}_i\subseteq A_i$, and each $\varphi_i\colon A_i\to A_{i+1}$ is a diagonal $^*$-monomorphism with $\varphi_i(\mathcal{L}_i)\subseteq\mathcal{L}_{i+1}$. In this case, we will use
\begin{equation}
\mathcal{L}(A)=\bigcup_{i\in\mathbb{N}}\varphi_{i,\infty}(\mathcal{L}_i)
\end{equation}
to measure the distances (\ref{eqn:du}), (\ref{eqn:dw}) and (\ref{eqn:wpp}) between $^*$-homomorphisms $\varphi,\psi$ from the limit $A=\varinjlim(A_i,\varphi_i)$ to a $\cs$-algebra $B$ (of stable rank one).
\end{definition}

\begin{remark} \label{remark:range}
The inductive limit models constructed in \cite{Jiang:1999hb} and \cite{Tsang:2005fj} to exhaust the range of the tracial invariant rely on the Krein--Milman type theorem of \cite{Li:1999aa} (which is based on \cite{Thomsen:1994qy}). The connecting maps of these constructions are in general not $1$-Lipschitz. Nonetheless, there are interesting examples covered by the results of this section. Some of these are described in \S\ref{section:examples}.
\end{remark}

\subsection{Dimension drop algebras} \label{subsection:dimdrop}

By a \emph{dimension drop algebra}, we mean a nice one-dimensional NCCW complex of the form
\[
Z_{p,q}=\{f\in C([0,1],M_{p}\otimes M_q)\mid f(0) \in M_p\otimes 1_q,\,f(1) \in 1_p\otimes M_q \},
\]
for some $p,q\in\mathbb{N}$. We say that $Z_{p,q}$ is \emph{prime} if $p$ and $q$ are coprime. These $\cs$-algebras are the building blocks of the Jiang--Su algebra $\js$ \cite{Jiang:1999hb}.

\begin{proposition} \label{prop:dimdrop}
Let $A=Z_{p,q}$ and $B=Z_{p',q'}$ be prime dimension drop algebras, and let $\varphi,\psi\colon A\to B$ be (necessarily unital) $^*$-monomorphisms. Then, relative to any Lipschitz family $\mathcal{L}\subseteq A$, $d_\mathcal{U}(\varphi,\psi) = d_\partial(\varphi,\psi) = d_W(\varphi,\psi)$.
\end{proposition}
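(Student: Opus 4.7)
Proposition \ref{prop:diagonal} already supplies $d_\partial(\varphi,\psi)\le d_W(\varphi,\psi)\le d_\mathcal{U}(\varphi,\psi)$ whenever $\varphi$ and $\psi$ are diagonal and $\mathcal{L}$ contains an element $g_\mathcal{L}$ of the type described in Remark \ref{remark:lipschitz}(\ref{lip5})---and prime dimension drop algebras do admit such an element. So the plan has two parts: first, reduce to the case where both $\varphi$ and $\psi$ are diagonal; second, close the loop by proving $d_\mathcal{U}(\varphi,\psi)\le d_\partial(\varphi,\psi)$.

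For the reduction, I would invoke the semiprojectivity of $Z_{p,q}$ recorded after Definition \ref{definition:diagonal}, together with the fact (essentially due to \cite{Jiang:1999hb}) that any unital $^*$-monomorphism between prime dimension drop algebras is approximately unitarily equivalent, on any finite subset of $\mathcal{L}$, to a diagonal $^*$-homomorphism in the sense of Definition \ref{definition:diagonal}. The coprimality of the parameters underwrites the integer multiplicity $l=p'q'/(pq)$ and the compatible satisfaction of the boundary conditions at $0$ and $1$. As each of the three distances is stable under approximate unitary equivalence up to arbitrary $\varepsilon>0$, I may henceforth assume $\varphi$ and $\psi$ are literally diagonal, with associated maps $\{\xi_i^\varphi\}$, $\{\xi_i^\psi\}$ and unitary paths $u^\varphi(t), u^\psi(t)\in M_{p'q'}$.

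For the reverse inequality, the $1$-Lipschitz property of $f\in\lip(u^*Au)$ (transported to $\mathcal{L}$ via the outer conjugation appearing in Definition \ref{definition:nice}) yields the pointwise bound
\[
\bigl\|\diag(f(\xi_i^\varphi(t)))-\diag(f(\xi_i^\psi(t)))\bigr\| = \max_i\bigl\|f(\xi_i^\varphi(t))-f(\xi_i^\psi(t))\bigr\| \le \max_i|\xi_i^\varphi(t)-\xi_i^\psi(t)| \le d_\partial(\varphi,\psi).
\]
Conjugating $\varphi(f)(t)$ by the path $w(t)=u^\psi(t)u^\varphi(t)^*$ therefore brings it to within $d_\partial(\varphi,\psi)$ of $\psi(f)(t)$ in operator norm, uniformly in $t\in[0,1]$ and in $f\in\mathcal{L}$.

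The hard part is certifying that this candidate $w$ actually belongs to $B=Z_{p',q'}$, that is, that $w(0)\in M_{p'}\otimes 1_{q'}$ and $w(1)\in 1_{p'}\otimes M_{q'}$. Since both $\varphi$ and $\psi$ land in $B$, the boundary values of $u^\varphi$ and $u^\psi$ each implement identifications respecting the dimension drop structure, but they may differ from one another by an inner automorphism within each fibre. I expect this residual discrepancy can be corrected by a small perturbation of $w$, using path-connectedness of $\mathcal{U}(M_{p'}\otimes 1_{q'})$ and $\mathcal{U}(1_{p'}\otimes M_{q'})$ together with the vanishing of $K_1(Z_{p',q'})$ afforded by primality, with any extra error swallowed by the $\varepsilon$-slack produced at the diagonalization step.
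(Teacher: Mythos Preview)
Your overall shape is right, and you have correctly located the crux: getting a conjugating unitary that actually lives in $B$. But the fix you sketch does not work. Your candidate $w(t)=u^\psi(t)u^\varphi(t)^*$ sends $\varphi(f)(t)$ to $u^\psi(t)\diag(f(\xi_i^\varphi(t)))u^\psi(t)^*$, and there is no reason for this element to lie in $M_{p'}\otimes 1_{q'}$ at $t=0$: the unitary $u^\psi(0)$ is adapted to the boundary pattern of the $\xi_i^\psi$, not of the $\xi_i^\varphi$. So $w(0)$ need not be anywhere near $\mathcal{U}(M_{p'}\otimes 1_{q'})$, and path-connectedness of that group or the vanishing of $K_1(Z_{p',q'})$ gives you nothing to grab onto. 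Any modification of $w$ near the endpoints large enough to force it into $B$ will in general not be absorbable into the $\varepsilon$-slack, because the error incurred is governed by the distance from $w(0)$ to $\mathcal{U}(M_{p'}\otimes 1_{q'})$, which is uncontrolled.

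The paper's way around this (following \cite{Masumoto:2017wx}) is to first produce a \emph{single} continuous unitary $v$ such that \emph{both} maps $f\mapsto v\diag(f\circ\xi^\varphi_j)v^*$ and $f\mapsto v\diag(f\circ\xi^\psi_j)v^*$ are $^*$-homomorphisms $A\to B$; call them $\psi_1$ and $\psi_2$. With a common $v$, your Lipschitz estimate gives $\|\psi_1(f)-\psi_2(f)\|\le d_\partial(\varphi,\psi)$ directly, with no conjugation required. It then remains to show that each original $\varphi_i$ is approximately unitarily equivalent to the corresponding $\psi_i$ via a unitary in $B$. This works precisely because $\varphi_i$ and $\psi_i$ share the \emph{same} eigenvalue maps: at $t\in\{0,1\}$ they are exactly conjugate via some $y_i(t)$ in the correct corner, and an approximate intertwining path $w_i$ can then be corrected at the endpoints through unitaries commuting with $\varphi_i(A)$, at no cost to the estimate. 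That endpoint correction is exactly what fails for your $w$, since $\varphi$ and $\psi$ have different eigenvalue maps and hence admit no exact endpoint intertwiner to anchor the commutant path.
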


\begin{proof}
By Proposition~\ref{prop:diagonal}, it suffices to show that $d_\mathcal{U}(\varphi,\psi) \le d_\partial(\varphi,\psi)$. A self contained demonstration of this inequality is provided by \cite{Masumoto:2017wx}. Here is a summary of the argument. Let $\varepsilon>0$. By \cite[Proposition 4.7]{Masumoto:2017wx}, $\varphi_1=\varphi$ and $\varphi_2=\psi$ may be assumed to be diagonal:
\begin{equation} \label{eqn:masumoto}
\varphi_i(f)=u_i\diag(f\circ\xi_1^i,\ldots,f\circ\xi_l^i)u_i^*, \: i=1,2,
\end{equation}
such that the associated unitaries $u_1$ and $u_2$ are continuous. The approximation there is stated for finite subsets, but compactness of $\mathcal{L}$ provides the required equicontinuity used for example in \cite[Proposition 3.5]{Masumoto:2017wx}. By \cite[Lemma 4.2]{Masumoto:2017wx}, there is a common unitary $v\in C([0,1],M_{p'q'})$ such that
\[
\psi_i \colon f\mapsto v\diag(f\circ\xi_1^i,\ldots,f\circ\xi_l^i)v^*, \: i=1,2,
\]
are $^*$-homomorphisms from $A$ to $B$. We may assume that $v$ is of the form $v(t)=x(t)\diag(\underbrace{u(t)^*}_l)$, where $u$ is the unitary associated to the Lipschitz family $\mathcal{L}$ (see Definition~\ref{definition:nice}), and $x(0)$, $x(1)$ are suitable permutation unitaries. Moreover, there are unitaries $y_i(0)\in M_{p'}\otimes1_{q'}$ and $y_i(1)\in 1_{p'}\otimes M_{q'}$, $i=1,2$, such that \[y_i(s)\varphi_i(f)(s)y_i(s)^* = \psi_i(f)(s) \text{ for all } f\in A,\, s\in\{0,1\}.\]

By \cite[Lemma 4.9]{Masumoto:2017wx}, there are unitaries $w_i\in C([0,1],M_{p'q'})$ such that
\[
\sup_{f\in\mathcal{L}}\|w_i\varphi(f)w_i^*-\psi_i(f)\| < \frac{\varepsilon}{2}, \: i=1,2.
\]
\sloppy
Connecting $w_i(s)^*y_i(s)$ to $1_{p'q'}$ via a path of unitaries commuting with $\varphi_i(A)$, we may in fact assume that $w_i(s)=y_i(s)$ for $s\in\{0,1\}$, that is, $w_1,w_2\in B$. Then, as in \cite[Proposition 4.10]{Masumoto:2017wx},
\begin{align*}
d_\mathcal{U}(\varphi,\psi) &\le \sup_{f\in\mathcal{L}}\|w_2\varphi_2(f)w_2^*-w_1\varphi_1(f)w_1^*\| \\
&\le \sup_{f\in\mathcal{L}}\|\psi_2(f)-\psi_1(f)\| + \varepsilon \\
&\le \sup_{f\in\lip(C([0,1],M_{pq}))} \sup_{t\in[0,1]} \max_{1\le j\le l} \|f(\xi_j^2(t))-f(\xi_j^1(t))\| + \varepsilon \\
&\le \sup_{t\in[0,1]} \max_{1\le j\le l} |\xi_j^2(t)-\xi_j^1(t)| + \varepsilon \\
&= d_\partial(\varphi,\psi) + \varepsilon. \qedhere
\end{align*}
\fussy
\end{proof}

Recall that \emph{pureness} is a regularity property of the Cuntz semigroup that is satisfied by, for example, simple, separable, $\js$-stable $\cs$-algebras (see \cite[\S3]{Winter:2012pi}).

\begin{proposition} \label{prop:model}
Let $B$ be an infinite-dimensional, simple, separable, unital, exact, pure $\cs$-algebra of stable rank one. Then, there is a simple inductive limit $C$ of prime dimension drop algebras and a unital embedding $\iota\colon C \to B$ that induces an affine homeomorphism $T(B)\to T(C)$.
\end{proposition}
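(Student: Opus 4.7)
The plan is to produce $C$ abstractly as a simple inductive limit of prime dimension drop algebras with tracial simplex affinely homeomorphic to $T(B)$, and then to realise $\iota\colon C \to B$ via Elliott-style approximate intertwining of unital $^*$-homomorphisms from the building blocks into $B$.

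For the abstract construction, I would note that $T(B)$ is a metrizable Choquet simplex (since $B$ is simple, separable and unital) and invoke the range-of-invariant results flagged in Remark~\ref{remark:range} to obtain a simple unital inductive limit $C = \varinjlim(C_n,\varphi_n)$, with each $C_n$ a prime dimension drop algebra, all $\varphi_n$ unital, and $T(C)$ affinely homeomorphic to $T(B)$. After fixing an affine homeomorphism $\Phi\colon T(B)\to T(C)$, I would write $\Phi_n = \varphi_{n,\infty}^*\circ\Phi\colon T(B)\to T(C_n)$ for the compatible sequence of continuous affine maps to be realised tracially.

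Next I would fix finite sets $F_1\subseteq F_2\subseteq\dots$ with $\bigcup_n F_n$ dense in $\bigcup_n \varphi_{n,\infty}(C_n)$, and inductively construct unital $^*$-monomorphisms $\iota_n\colon C_n\to B$ and unitaries $u_n\in\mathcal{U}(B)$ satisfying $\iota_n^* = \Phi_n$ together with $\|u_{n+1}^*\iota_{n+1}(\varphi_n(f))u_{n+1} - u_n^*\iota_n(f)u_n\| < 2^{-n}$ for all $f\in F_n$. Existence of each $\iota_n$ with the prescribed tracial behaviour would come from the existence half of the classification of unital $^*$-homomorphisms from a semiprojective one dimensional NCCW complex (here $C_n$ has $K_0=\mathbb{Z}$ and $K_1=0$) into the simple, pure, stable rank one codomain $B$, along Robert's lines (\cite{Robert:2010qy}); uniqueness applied to $\iota_n$ and $\iota_{n+1}\circ\varphi_n$, which share their unital $K_0$-data and the tracial invariant $\Phi_n$, would then deliver $u_{n+1}$. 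A standard approximate intertwining assembles these into a unital $^*$-homomorphism $\iota\colon C\to B$, automatically injective by simplicity of $C$; by construction, $\iota^*$ equals $\Phi_n$ on $\varphi_{n,\infty}(C_n)$ for every $n$, and hence equals $\Phi$ on all of $C$, yielding the required affine homeomorphism $T(B)\to T(C)$.

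The main obstacle is the existence input at each stage: producing a unital $^*$-monomorphism $C_n\to B$ realising a prescribed continuous affine map $T(B)\to T(C_n)$. For $\mathcal{Z}$-stable $B$ this is routine, but here I must leverage pureness together with stable rank one to recover enough Cuntz-semigroup regularity to run the Robert-style existence and uniqueness arguments in the absence of outright $\mathcal{Z}$-stability.
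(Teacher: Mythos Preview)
Your proposal is correct but takes a more laborious route than the paper. Both arguments begin by invoking \cite[Theorem 4.5]{Jiang:1999hb} to produce a simple inductive limit $C$ of prime dimension drop algebras with $T(C)$ affinely homeomorphic to $T(B)$. The divergence is in how $\iota$ is built. The paper observes that $C$ itself, being an inductive limit of one dimensional NCCW complexes with trivial $K_1$, lies in the domain of Robert's classification \cite{Robert:2010qy}; since $C$ is simple, unital and projectionless, $\cu(C)$ is just $\mathbb{N}\sqcup\laff(T(C))$, and the affine homeomorphism $T(B)\cong T(C)$ together with $[1_C]\mapsto[1_B]$ immediately defines a $\cu$-morphism $\cu(C)\to\cu(B)$, which Robert lifts in one stroke to the desired unital embedding. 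You instead work stage by stage, producing $\iota_n\colon C_n\to B$ with prescribed tracial data and intertwining via uniqueness. This is sound, but the ``main obstacle'' you flag---constructing the required $\cu$-morphism out of each \emph{non-simple} $C_n$---is precisely what the paper's approach avoids by passing to the simple limit first. Your method would pay off in situations where the limit falls outside Robert's class; here it is extra work.
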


\begin{proof}
This follows from \cite[Theorem 4.5]{Jiang:1999hb} (which says that there exists $C$ whose tracial simplex is affinely homeomorphic to that of $B$) and the results of \cite{Robert:2010qy} (which in particular imply that the natural embedding of $\cu(C)$ into $\cu(B)$ lifts to an embedding of $C$ into $B$).
\end{proof}

\begin{theorem} \label{thm:jiangsu}
Let $A$ be the direct limit of a $1$-Lipschitz system $(Z_{p_i,q_i},\mathcal{L}_i,\varphi_i)$, let $B$ be an infinite-dimensional, simple, separable, unital, exact, pure $\cs$-algebra of stable rank one, and let $\varphi,\psi\colon A\to B$ be unital $^*$-monomorphisms. Then, relative to $\mathcal{L}(A)=\bigcup_{i\in\mathbb{N}}\varphi_{i,\infty}(\mathcal{L}_i)$,
\[
d_\mathcal{U}(\varphi,\psi) = d_W(\varphi,\psi) = W_\infty(\varphi,\psi).
\]
\end{theorem}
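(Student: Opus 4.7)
The plan is to prove $d_\mathcal{U}(\varphi,\psi)=d_W(\varphi,\psi)=W_\infty(\varphi,\psi)$ by first establishing the easy chain $W_\infty\le d_W\le d_\mathcal{U}$ and then working to show $d_\mathcal{U}\le W_\infty$. For the easy direction, note that $A$ is projectionless (the prime dimension drops $Z_{p_i,q_i}$ are projectionless and the unital connecting maps preserve this), and that $B$ is simple, exact and pure, so has strict comparison; hence Remark~\ref{remark:lipschitz}(\ref{lip4}), applied supremally over $\mathcal{L}(A)$, yields $d_W(\varphi,\psi)=W_\infty(\varphi,\psi)$. The inequality $d_W\le d_\mathcal{U}$ follows from \cite[Lemma 1]{Robert:2010rz} pointwise on each $f\in\mathcal{L}(A)$, exactly as in the proof of Proposition~\ref{prop:diagonal}, without needing diagonality of $\varphi,\psi$.

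For the difficult inequality $d_\mathcal{U}(\varphi,\psi)\le W_\infty(\varphi,\psi)$, fix a finite set $F\subseteq\mathcal{L}(A)$ and $\varepsilon>0$. Choose $N$ with $F\subseteq\varphi_{N,\infty}(\mathcal{L}_N)$, say $F=\varphi_{N,\infty}(F_0)$, and set $\tilde\varphi=\varphi\circ\varphi_{N,\infty}$, $\tilde\psi=\psi\circ\varphi_{N,\infty}$, unital $^*$-monomorphisms from the prime dimension drop $Z_{p_N,q_N}$ into $B$. It suffices to produce $u\in\mathcal{U}(B)$ with $\sup_{g\in F_0}\|u\tilde\varphi(g)u^*-\tilde\psi(g)\|<W_\infty(\varphi,\psi)+\varepsilon$. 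Use Proposition~\ref{prop:model} to pick a simple inductive limit $C=\varinjlim C_k$ of prime dimension drops together with a unital embedding $\iota\colon C\to B$ inducing an affine homeomorphism $\iota^*\colon T(B)\to T(C)$.

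Next I would transfer $\tilde\varphi,\tilde\psi$ into $C$. Because $Z_{p_N,q_N}$ has trivial $K_1$ and its only non-trivial projection is the unit, the Cuntz semigroup morphism $\cu(\tilde\varphi)\colon\cu(Z_{p_N,q_N})\to\cu(B)$ is determined by where it sends $[1]$ and by the tracial data on the purely non-compact part. Since $\iota^*$ is a tracial affine homeomorphism and $\iota$ is unital, $\cu(\tilde\varphi)$ factors through $\iota_*\colon\cu(C)\to\cu(B)$, and Robert's existence/uniqueness theorem \cite{Robert:2010qy} for unital $^*$-homomorphisms from $K_1$-trivial one-dimensional NCCW complexes into stable rank one $\cs$-algebras produces unital $^*$-homomorphisms $\hat\varphi,\hat\psi\colon Z_{p_N,q_N}\to C$ with $\iota\circ\hat\varphi\approx_{\text{a.u.e.}}\tilde\varphi$ and $\iota\circ\hat\psi\approx_{\text{a.u.e.}}\tilde\psi$, approximate to within $\varepsilon/4$ on $F_0$. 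Semi-projectivity of $Z_{p_N,q_N}$ (\cite[Theorem 6.2.2]{Eilers:1998yu} and \cite[Lemma 3.7]{Loring:1993kq}) then lets me approximate $\hat\varphi,\hat\psi$ within $\varepsilon/4$ on $F_0$ by unital $^*$-homomorphisms $\varphi_k,\psi_k\colon Z_{p_N,q_N}\to C_k$ for some large $k$.

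Now I apply Proposition~\ref{prop:dimdrop} to $\varphi_k,\psi_k$ as $^*$-monomorphisms between prime dimension drops to obtain a unitary $v\in\widetilde{C_k}$ with $\sup_{g\in F_0}\|v\varphi_k(g)v^*-\psi_k(g)\|<d_W(\varphi_k,\psi_k)+\varepsilon/4$. Since $\iota$ is injective (so preserves $d_W$, a spectral invariant) and tracially bijective (so preserves $W_\infty$), and since Remark~\ref{remark:lipschitz}(\ref{lip4}) applied in $B$ converts $d_W$ into $W_\infty$, I can rewrite $d_W(\varphi_k,\psi_k)=W_\infty(\iota\varphi_k,\iota\psi_k)$; the two a.u.e.\ approximations transfer this to $W_\infty(\tilde\varphi,\tilde\psi)\le W_\infty(\varphi,\psi)$ with an error absorbed into the remaining $\varepsilon/2$. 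Setting $u=\iota(v)\in\mathcal{U}(\tilde B)$ and chaining the estimates through $\iota\varphi_k\approx\tilde\varphi\approx\varphi|_F$ and similarly for $\psi$ gives the desired bound. The main obstacle is the classification step: one must justify that every unital $^*$-homomorphism $Z_{p_N,q_N}\to B$ is approximately unitarily equivalent to one factoring through $\iota(C)$, which requires carefully invoking Robert's classification and the compatibility of Cuntz semigroup morphisms with the tracial homeomorphism supplied by Proposition~\ref{prop:model}.
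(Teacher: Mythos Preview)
Your proposal is correct and follows essentially the same route as the paper's proof: reduce to a single building block $Z_{p_N,q_N}$, use Proposition~\ref{prop:model} together with Robert's classification \cite{Robert:2010qy} to factor (up to approximate unitary equivalence) through the simple limit $C$, invoke semiprojectivity to push into a finite stage $C_k$, and then apply Proposition~\ref{prop:dimdrop}. The paper's version is terser but structurally identical.

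One small caution: your parenthetical that $\iota$ ``preserves $d_W$, a spectral invariant'' is not quite right as stated, since $d_W$ is defined via Cuntz subequivalence in the ambient algebra and in general only decreases under embeddings. What actually carries the argument is the inequality $d_\mathcal{U}^B(\iota\varphi_k,\iota\psi_k)\le d_\mathcal{U}^{C_k}(\varphi_k,\psi_k)=d_\partial(\varphi_k,\psi_k)$, together with the fact that $d_\partial$ is witnessed by the element $g_\mathcal{L}$ of Remark~\ref{remark:lipschitz}(\ref{lip5}) and hence matches $d_W$ (and thus $W_\infty$) after passing back to $B$. This is implicit in the paper as well; just be careful not to lean on ``$d_W$ is spectral'' as the justification.
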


\begin{proof}
We proceed as in \cite[Theorem 4.1]{Jacelon:2014aa}, assuming without loss of generality that $A=Z_{p,q}$. At the level of the Cuntz semigroup, $\varphi$ and $\psi$ factor through the limit $C$ of Proposition~\ref{prop:model}. By \cite{Robert:2010qy}, this factorization is at the $^*$-homomorphism level (up to approximate unitary equivalence). By semiprojectivity, we can then assume that $\varphi$ and $\psi$ in fact map into some $Z_{p',q'}$. The theorem now follows from Proposition~\ref{prop:dimdrop}.
\end{proof}

\subsection{The interval} \label{subsection:interval}

\begin{proposition} \label{prop:interval}
Let $\varphi,\psi$ be unital $^*$-monomorphisms from $C([0,1])$ to a nice one-dimensional NCCW complex $B$. Then, relative to $\mathcal{L}=\lip([0,1])$,
\begin{equation}
d_\mathcal{U}(\varphi,\psi) = d_\partial(\varphi,\psi) = d_W(\varphi,\psi).
\end{equation}
\end{proposition}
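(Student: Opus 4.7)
The plan is to mirror the proof of Proposition~\ref{prop:dimdrop}, adapting the argument of \cite{Masumoto:2017wx} to the simpler setting of a commutative source. Once $\varphi$ and $\psi$ are put in diagonal form (up to $\varepsilon$ in $d_\mathcal{U}$), Proposition~\ref{prop:diagonal} delivers $d_\partial(\varphi,\psi) \le d_W(\varphi,\psi) \le d_\mathcal{U}(\varphi,\psi)$ for free, so the essential content of the proposition is the reverse inequality $d_\mathcal{U}(\varphi,\psi) \le d_\partial(\varphi,\psi)$.

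First I would diagonalize. Because $C([0,1])$ is generated by $\id$, a unital $^*$-monomorphism $\varphi\colon C([0,1]) \to B \subseteq C([0,1],M_m)$ is determined by the self-adjoint element $a^\varphi=\varphi(\id)\in B$. At each $t\in[0,1]$, the matrix $a^\varphi(t)\in M_m$ has ordered eigenvalues $\xi_1^\varphi(t)\le\dots\le\xi_m^\varphi(t)\in[0,1]$, and these functions are automatically continuous. After a self-adjoint perturbation of $a^\varphi$ within $B$ of norm $<\varepsilon$, chosen so that the $\xi_i^\varphi$ are pairwise distinct on the interior $(0,1)$ (where, unlike at the endpoints, no boundary constraint forces multiplicity), a continuous unitary $u^\varphi\colon [0,1] \to \mathrm{U}(m)$ diagonalizes the perturbed element. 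By further right-multiplication by a locally constant unitary one may arrange that $u^\varphi(0),u^\varphi(1)$ are permutation unitaries compatible with the boundary maps $\alpha_0,\alpha_1$, so that $\varphi$ is placed in the diagonal form of Definition~\ref{definition:diagonal}. The same treatment is applied to $\psi$.

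Then I would follow the remaining steps of the Masumoto argument (his Lemmas~4.2 and~4.9 and Proposition~4.10) to produce a common continuous unitary $v$ realizing both diagonalizations simultaneously, endpoint unitaries $y_0,y_1$ in the boundary fibers $\alpha_0(E),\alpha_1(E)$ conjugating the diagonal forms there, and a global unitary $w\in B$ obtained by connecting $y_0$ to $y_1$ along a path pointwise commuting with the diagonal image of $\varphi$. Applying the Lipschitz condition on $f\in\lip([0,1])$ fiberwise gives
\begin{equation*}
\|f(\xi_i^\varphi(t))-f(\xi_i^\psi(t))\| \le |\xi_i^\varphi(t)-\xi_i^\psi(t)|,
\end{equation*}
and the same telescoping as in \cite[Proposition~4.10]{Masumoto:2017wx} yields $\sup_{f\in\mathcal{L}}\|w\varphi(f)w^*-\psi(f)\| \le d_\partial(\varphi,\psi)+\varepsilon$.

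The main obstacle, and the one subtlety the commutative case actually shares with the dimension drop case, is verifying that the connecting path of unitaries can be made to lie in $B$ rather than merely in $C([0,1],M_m)$, i.e., that it respects the endpoint conditions encoded by $\alpha_0$ and $\alpha_1$. Niceness of $B$ ensures each $\alpha_i(E)$ is a single matrix algebra, so $y_0,y_1$ can be chosen inside it, and the required path commuting with the diagonal image exists because the eigenvalue functions $\xi_i^\varphi$ (and $\xi_i^\psi$) are pairwise distinct on $(0,1)$ after the preliminary perturbation. With this adjustment, the arguments of \cite{Masumoto:2017wx} transfer essentially verbatim and yield the inequality $d_\mathcal{U}(\varphi,\psi)\le d_\partial(\varphi,\psi)$.
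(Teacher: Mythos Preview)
Your approach is sound in outline and would work, but it differs from the paper's. The paper does not re-run the Masumoto machinery; it simply invokes Proposition~\ref{prop:diagonal} together with \cite[Lemma~4.1, Proposition~5.1]{Jacelon:2021uc}. Those results supply, respectively, the exact diagonal form for unital $^*$-monomorphisms $C([0,1])\to B$ into a nice one dimensional NCCW complex, and the inequality $d_\mathcal{U}\le d_\partial$ in that setting. So the paper's proof is a two-line citation rather than an adaptation of \cite{Masumoto:2017wx}.

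Comparing the two: Masumoto's lemmas are written specifically for prime dimension drop targets, so your adaptation must re-verify the endpoint bookkeeping for an arbitrary nice $B$ (you flag this yourself). The reference \cite{Jacelon:2021uc} already does this in the generality needed here, which is why the paper prefers it. Your route is more self-contained but carries the overhead of reproving the diagonalization. One small point worth tightening: the perturbation step you describe (separating eigenvalues on $(0,1)$) is not actually necessary. A self-adjoint element of $C([0,1],M_m)$ can always be diagonalized by a continuous unitary without any perturbation, and this is what \cite[Lemma~4.1]{Jacelon:2021uc} records; the delicate part is only arranging the endpoint permutation structure so that the conjugating unitary lands in $B$, which both references handle. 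Your sketch of that endpoint argument is correct in spirit but would need the same care as in \cite[Lemma~4.2, Lemma~4.9]{Masumoto:2017wx}, now for a general nice boundary pair $(\alpha_0,\alpha_1)$ rather than the specific dimension drop one.
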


\begin{proof}
This follows from Proposition~\ref{prop:diagonal} and \cite[Lemma 4.1, Proposition 5.1]{Jacelon:2021uc}.
\end{proof}

\begin{theorem} \label{thm:interval}
\sloppy
Let $A$ be a simple, separable, unital, nuclear, finite, $\js$-stable $\cs$-algebra, let $\varphi,\psi\colon C([0,1])\to A$ be unital $^*$-monomorphisms, and let $p\in[1,\infty]$. If either $p=\infty$ or the real rank of $A$ is zero and the extreme boundary $\partial_e(T(A))$ of the tracial state space of $A$ is compact and of finite Lebesgue covering dimension, then, relative to $\lip([0,1])$,
\[
d_{\mathcal{U},p}(\varphi,\psi) = W_p(\varphi,\psi).
\]
\fussy
\end{theorem}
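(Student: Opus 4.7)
The inequality $W_p(\varphi,\psi)\le d_{\mathcal{U},p}(\varphi,\psi)$ is Corollary~\ref{corollary:interval}; the plan is to establish the reverse bound, splitting by the value of $p$. For $p=\infty$, I would reduce the codomain, via classification, to a prime dimension drop algebra where Proposition~\ref{prop:interval} applies. Proposition~\ref{prop:model} supplies a simple inductive limit $C=\varinjlim Z_{p_i,q_i}$ of prime dimension drop algebras together with a unital embedding $\iota\colon C\hookrightarrow A$ that induces an affine homeomorphism $T(A)\to T(C)$. Since both $A$ and $C$ are simple of stable rank one with matching enriched tracial data, their Cuntz semigroups are isomorphic via $\iota$, so Robert's classification \cite{Robert:2010qy} lets me lift $\varphi$ and $\psi$ through $\iota$ up to approximate unitary equivalence, producing $\tilde\varphi,\tilde\psi\colon C([0,1])\to C$. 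Projectivity of $C([0,1])$ (generated by a single self-adjoint contraction, easily approximated at a finite stage by functional calculus) further reduces these to maps into some $B=Z_{p',q'}$. Proposition~\ref{prop:interval} gives $d_\mathcal{U}=d_W$ there, Remark~\ref{remark:lipschitz}(\ref{lip4}) identifies $d_W$ with $W_\infty$ on the projectionless simple $B$, and $\iota$ preserves $W_\infty$, chaining back to the required bound.

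For $p\in[1,\infty)$, I would follow the three-step architecture of Theorem~\ref{thm:bauer}, replacing $\|\cdot\|$ by $\|\cdot\|_p$, $W_\infty$ by $W_p$, and the abstract transport constant $k_X$ by the $L^p$-estimate of Proposition~\ref{prop:wasserstein}. \textbf{Step 1} (perturbation): exactly as in Theorem~\ref{thm:bauer}, use a partition of unity $\{f_i\}_{i=1}^m$ on $\partial_e T(A)$ of multiplicity $\le d+1$ together with \cite[Theorem 2.6]{Matui:2011uq} to replace $\varphi,\psi$ by unital $^*$-monomorphisms whose induced trace maps are $\sum_i f_i(\tau)\tau_{\mu_i}$ and $\sum_i f_i(\tau)\tau_{\nu_i}$ for some faithful diffuse $\mu_i,\nu_i\in\mathcal{M}_g([0,1])$. \textbf{Step 2} (diagonalization): approximate the $f_i$ by orthogonal projections $p_i\in A$ summing to $1$ (using pureness from $\mathcal{Z}$-stability and real rank zero), and decompose $\psi\approx\bigoplus_i\iota_i\circ\psi_i$ with $\psi_i\colon C([0,1])\to p_iAp_i$ sending every tracial state on $p_iAp_i$ to $\tau_{\nu_i}$; this is considerably simpler than in Theorem~\ref{thm:bauer} because $\widetilde K_0(C([0,1]))=K_1(C([0,1]))=0$. \textbf{Step 3} (transport): invoke Proposition~\ref{prop:wasserstein} to obtain homeomorphisms $h_i\colon[0,1]\to[0,1]$ with $(h_i)_*\nu_i=\mu_i$ and $\|h_i-\id\|_{L^p(\nu_i)}\le W_p(\mu_i,\nu_i)$.

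Set $\varphi'=\bigoplus_i\iota_i\circ\psi_i\circ h_i^*$. Because the elements $\psi_i(f\circ h_i-f)\in p_iAp_i$ are orthogonal and self-adjoint, functional calculus gives $|\varphi'(f)-\psi(f)|^p=\sum_i\psi_i(|f\circ h_i-f|^p)$, and the Lipschitz bound $|f\circ h_i-f|\le|h_i-\id|$ for $f\in\lip([0,1])$ yields, for any $\tau\in T(A)$,
\[
\tau(|\varphi'(f)-\psi(f)|^p)=\sum_{i=1}^m\tau(p_i)\int|f\circ h_i-f|^p\,d\nu_i\le\max_{1\le i\le m}W_p(\mu_i,\nu_i)^p\le W_p(\varphi,\psi)^p,
\]
the last step because each pair $(\mu_i,\nu_i)$ is the image under $\varphi^*,\psi^*$ of an extreme trace. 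Thus $\|\varphi'(f)-\psi(f)\|_p\le W_p(\varphi,\psi)$. Combined with the classification of \cite{Carrion:wz} (which bounds $d_{\mathcal{U},p}(\varphi,\varphi')\le d_\mathcal{U}(\varphi,\varphi')\le\varepsilon$ since $\varphi'$ and $\varphi$ agree on $KL$ and approximately on traces on the finite set $G\subseteq C([0,1])_+^1$ provided by the classification), this closes the proof up to arbitrary $\varepsilon>0$.

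The main obstacle I anticipate is the interplay between Steps 2 and 3 for $p<\infty$: the $L^p(\nu_i)$-bound from Proposition~\ref{prop:wasserstein} translates to a Schatten $p$-norm estimate precisely because Step 2's diagonalization collapses every trace on $p_iAp_i$ to the single measure $\tau_{\nu_i}$, so the computation above does not split $\tau(p_i)$ and the integrating measure. Requiring this trace-collapsing diagonalization is also what forces the assumptions of real rank zero and finite-dimensional $\partial_e T(A)$. For $p=\infty$, the only subtle point is verifying that $\cu(\iota)$ is sufficiently isomorphic for Robert's lifting to apply; this follows from $\iota$ inducing an affine homeomorphism on trace spaces, which determines the Cuntz semigroup in the simple stable rank one setting.
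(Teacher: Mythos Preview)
Your proposal is correct and follows the paper's own approach in both cases: for $p<\infty$ you rerun the three-step architecture of Theorem~\ref{thm:bauer} with Proposition~\ref{prop:wasserstein} supplying the $L^p$ transport bound and Corollary~\ref{corollary:interval} the lower bound, and for $p=\infty$ you reduce via Proposition~\ref{prop:model} and semiprojectivity to a prime dimension drop algebra where Proposition~\ref{prop:interval} applies, exactly as in the proof of Theorem~\ref{thm:jiangsu}.

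Two small inaccuracies in the $p=\infty$ sketch are worth flagging. First, $\cu(\iota)$ is not in general an isomorphism: if $K_0(A)\ne\mathbb{Z}$ then $\cu(A)$ and $\cu(C)$ differ on the compact part. What you actually need (and what holds) is that $\cu(\varphi)$ and $\cu(\psi)$ factor through $\cu(\iota)$, which follows because $C([0,1])$ has no nontrivial projections, so these morphisms land in the soft part $\laff(T(A))\cong\laff(T(C))$ together with the class of the unit. Second, your appeal to Remark~\ref{remark:lipschitz}(\ref{lip4}) for $B=Z_{p',q'}$ is misplaced since $Z_{p',q'}$ is not simple; the identification $d_W=W_\infty$ should instead be made in the simple codomain $A$ (where Remark~\ref{remark:lipschitz}(\ref{lip4}) does apply), or argued directly from $d_\partial=\sup_t W_\infty(\hat\varphi^*\tau_t,\hat\psi^*\tau_t)$. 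Neither point affects the strategy.
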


\begin{proof}
First suppose that the real rank of $A$ is zero and that $\partial_e(T(A))$ is compact and finite dimensional. The proof in this case is the same as that of Theorem~\ref{thm:bauer}, with suitably modified justification for the last string of inequalities. Namely, we use Proposition~\ref{prop:lipschitz} instead of \cite[Proposition 3.4]{Jacelon:2021wa} and Corollary~\ref{corollary:interval} instead of \cite[Corollary 3.6]{Jacelon:2021wa}, and the behaviour of the transport maps $h_i$ is governed by Proposition~\ref{prop:wasserstein}. Note that $d_{\mathcal{U},p}(\varphi',\varphi)\le d_{\mathcal{U}}(\varphi',\varphi)$, so the initial and final inequalities in the string hold with $d_{\mathcal{U},p}$ in place of $d_{\mathcal{U}}$. Everything else in the argument is unchanged.

The case $p=\infty$, without restriction on $T(A)$, follows from Proposition~\ref{prop:model} and Proposition~\ref{prop:interval}, just as in the proof of Theorem~\ref{thm:jiangsu}.
\end{proof}

\subsection{Razak blocks} \label{subsection:razak}

Recall that a \emph{Razak block} is a $\cs$-algebra of the form
\begin{eqnarray*}
A_{n,k}=\{f\in C([0,1],M_{nk})\mid \exists a\in M_k(f(0)=&\diag(\underbrace{a}_\text{n}),\\ f(1)=&\diag(\underbrace{a}_\text{n-1},\underbrace{0}_k))\}
\end{eqnarray*}
for some $n,k\in\mathbb{N}$ (so that $A_{n,k} \cong A_{n,1}\otimes M_k$).

For every trace $\tau$ on $A_{n,k}$, there is a unique Borel probability measure $\mu=\mu_\tau$ on $[0,1)$ such that $\tau(f)=\int_0^1 \tr_{nk}(f) d\mu$, where $\tr_N$ denotes the tracial state on $M_N$. We write $T_{g}(A_{n,k})$ for those traces corresponding to faithful, diffuse probability measures $\mu$, and we write $\lambda$ for the trace corresponding to the Lebesgue measure.

There is a unique simple inductive limit of Razak blocks $\mathcal{W}=\varinjlim(A_i,\alpha_i)$ that has a unique bounded trace (see \cite{Jacelon:2010fj}). We will use the `generic' construction of $\mathcal{W}$ in \cite[Definition 3.10]{Jacelon:2021uc}.

The proof of the following is very similar to that of Proposition~\ref{prop:dimdrop}.

\begin{proposition} \label{prop:razak}
Let $A=A_{n,k}$ and $B=A_{n',k'}$ be Razak blocks, and let $\varphi,\psi\colon A\to B$ be diagonal $^*$-monomorphisms. Then, relative to any Lipschitz family $\mathcal{L}\subseteq A$,
\[
d_\mathcal{U}(\varphi,\psi) = d_\partial(\varphi,\psi) = d_W(\varphi,\psi).
\]
\end{proposition}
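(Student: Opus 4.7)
The plan is to follow, step by step, the Masumoto-style argument used to prove Proposition~\ref{prop:dimdrop}, adapting it to the boundary structure of Razak blocks. The chain $d_\partial \le d_W \le d_\mathcal{U}$ is already supplied by Proposition~\ref{prop:diagonal}, so the task reduces to establishing $d_\mathcal{U}(\varphi,\psi) \le d_\partial(\varphi,\psi) + \varepsilon$ for an arbitrary $\varepsilon > 0$.

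First I would establish the Razak analogues of \cite[Proposition~4.7]{Masumoto:2017wx} and \cite[Lemma~4.2]{Masumoto:2017wx}, producing a single continuous path $v$ of unitaries in $M_{n'k'}$ such that $\psi_i \colon f \mapsto v\,\diag(f\circ\xi_1^i,\ldots,f\circ\xi_l^i)\,v^*$ is a $^*$-homomorphism from $A$ into $B$ for both $i=1$ (with $\varphi_1 = \varphi$) and $i=2$ (with $\varphi_2 = \psi$). As in Proposition~\ref{prop:dimdrop}, $v$ can be taken of the form $v(t) = x(t)\,\diag(u(t)^*,\ldots,u(t)^*)$ where $u$ is the unitary of the Lipschitz family $\mathcal{L}$ and $x(0), x(1)$ are permutation matrices; compactness of $\mathcal{L}$ delivers the equicontinuity needed to pass from finite subsets to all of $\mathcal{L}$. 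Next, I would construct unitaries $y_i(0), y_i(1)$ in the appropriate commutants so that $y_i(s)\varphi_i(f)(s)y_i(s)^* = \psi_i(f)(s)$ for $s\in\{0,1\}$, and then an analogue of \cite[Lemma~4.9]{Masumoto:2017wx} produces continuous unitaries $w_i$ agreeing with $y_i$ at the endpoints and satisfying $\|w_i\varphi_i(f)w_i^* - \psi_i(f)\| < \varepsilon/2$ uniformly on $\mathcal{L}$. Since $w_1,w_2 \in B$, the triangle-inequality chain used at the end of the proof of Proposition~\ref{prop:dimdrop} then yields $d_\mathcal{U}(\varphi,\psi) \le d_\partial(\varphi,\psi) + \varepsilon$, the decisive input being that for $f\in\lip(A)$ one has $\|f(\xi_j^2(t)) - f(\xi_j^1(t))\| \le |\xi_j^2(t) - \xi_j^1(t)|$.

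The main obstacle is the commutant bookkeeping at the endpoint $t=1$. For a dimension drop algebra the boundary fibres $M_p\otimes 1_q$ and $1_p\otimes M_q$ have transparent commutants, whereas the Razak boundary condition $f(1) = \diag(a,\ldots,a,0_k)$ introduces a degenerate $k\times k$ zero corner, so the local commutant sits inside the upper $(n-1)k\times (n-1)k$ block together with the zero corner. Coordinating this with the permutation $x(1)$ so that $\psi_i(f)(1)$ lands in the correct subalgebra of $M_{n'k'}$ requires some care, and the nonunitality of Razak blocks means the analogue of \cite[Lemma~4.9]{Masumoto:2017wx} must be revisited with this degeneracy in mind. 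Once this endpoint matching is handled, the rest of Masumoto's argument transfers essentially verbatim, which is presumably why the author leaves the details to the reader.
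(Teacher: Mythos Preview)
Your approach is correct and is precisely what the paper intends: it explicitly says the proof is ``very similar to that of Proposition~\ref{prop:dimdrop}'' and leaves the details to the reader, so adapting the Masumoto argument with attention to the degenerate zero corner at $t=1$ is exactly right. One minor simplification: since $\varphi$ and $\psi$ are already assumed diagonal in the statement (unlike in Proposition~\ref{prop:dimdrop}), you can skip the analogue of \cite[Proposition~4.7]{Masumoto:2017wx} and begin directly with the common-unitary step corresponding to \cite[Lemma~4.2]{Masumoto:2017wx}.
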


\begin{proof}
Once again, we summarize the justification of $d_\mathcal{U}(\varphi,\psi) \le d_\partial(\varphi,\psi)$. Just as for dimension drop algebras, one first perturbs $\varphi_1=\varphi$ and $\varphi_2=\psi$ so that the unitaries $u_1$ and $u_2$ in their diagonal descriptions as
\[
\varphi_i(f)=u_i\diag(f\circ\xi_1^i,\ldots,f\circ\xi_l^i)u_i^*, \: i=1,2,
\]
are continuous. The argument for this is essentially the same as \cite[Proposition 3.5]{Masumoto:2017wx}. Next, one shows that there is a common unitary $v\in C([0,1],M_{p'q'})$ such that
\[
\psi_i \colon f\mapsto v\diag(f\circ\xi_1^i,\ldots,f\circ\xi_l^i)v^*, \: i=1,2,
\]
are $^*$-homomorphisms from $A$ to $B$. The argument is by counting eigenvalues, similar to what is done in \cite[Lemma 4.2]{Masumoto:2017wx}. The key point is that, with
\[
n_{s,t}^{(i)} = |\{j \mid \xi_j^i(s)=t\}|,
\]
it is easy to see that $\varphi_1$ and $\varphi_2$ satisfy
\[
n_{0,t}^{(i)} \equiv 0 \mod n' \quad,\quad n_{1,t}^{(i)} \equiv 0 \mod n'-1 \quad, \quad i=1,2,
\]
if $t\in(0,1)$, and from this it is not hard to show that
\[
n_{0,t}^{(1)} \equiv n_{0,t}^{(2)} \mod n' \quad,\quad n_{1,t}^{(1)} \equiv n_{1,t}^{(2)} \mod n'-1 
\]
if $t=0$ or $t=1$. Using this, one can find suitable permutation unitaries $v(0)$ and $v(1)$, and take $v$ as a unitary path between the two. The rest of the proof is exactly the same as that of Proposition~\ref{prop:dimdrop}
\end{proof}

\begin{theorem} \label{thm:razak}
Let $A$ be the direct limit of a $1$-Lipschitz system $(A_{n_i,k_i},\mathcal{L}_i,\varphi_i)$ with each $\varphi_i$ nondegenerate, let $B$ be an infinite-dimensional, algebraically simple, separable, exact, pure $\cs$-algebra of stable rank one, and let $\varphi,\psi\colon A\to B$ be nondegenerate $^*$-monomorphisms. Then, relative to $\mathcal{L}(A)=\bigcup_{i\in\mathbb{N}}\varphi_{i,\infty}(\mathcal{L}_i)$,
\[
d_{\mathcal{U}}(\varphi,\psi) = d_W(\varphi,\psi) = W_\infty(\varphi,\psi).
\]
\end{theorem}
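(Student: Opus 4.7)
The argument follows the template of Theorem~\ref{thm:jiangsu}, with Razak blocks and a $\mathcal{W}$-type inductive model playing the roles of prime dimension drops and the $\mathcal{Z}$-type limit $C$ of Proposition~\ref{prop:model}. The plan is to establish the chain
\[
W_\infty(\varphi,\psi) \le d_W(\varphi,\psi) \le d_\mathcal{U}(\varphi,\psi) \le W_\infty(\varphi,\psi).
\]
The first of these is in fact an equality by Remark~\ref{remark:lipschitz}(\ref{lip4}): $A$ has no nontrivial projections, being a limit of Razak blocks, while $B$ is simple and exact with strict comparison (from pureness together with stable rank one), so $d_W(\varphi(f),\psi(f))=W_\infty(\varphi(f),\psi(f))$ for each $f\in\mathcal{L}(A)$, and taking suprema gives $d_W(\varphi,\psi)=W_\infty(\varphi,\psi)$. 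The inequality $d_W\le d_\mathcal{U}$ is obtained as in Proposition~\ref{prop:diagonal} via \cite[Lemma 1]{Robert:2010rz}, supplemented by \cite[Lemma 1]{Cheong:2015aa} and \cite[Theorem 4.6]{Toms:2009uq} to cover the definition of $d_W$ from Definition~\ref{definition:nice}.

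For the remaining inequality $d_\mathcal{U}\le W_\infty$, I would first reduce to $A=A_{n,k}$ a single Razak block: the $1$-Lipschitz hypothesis on each $\varphi_i$ preserves the distances in play, and compactness of the $\mathcal{L}_i$ together with semiprojectivity (\cite[Theorem 6.2.2]{Eilers:1998yu}, \cite[Lemma 3.7]{Loring:1993kq}) allows any $\varepsilon$-test of $\mathcal{L}(A)$-behaviour to be realized at a finite stage. Next, produce a Razak-block model inside $B$: combining \cite{Jacelon:2010fj} (the Razak analog of \cite[Theorem 4.5]{Jiang:1999hb}) with Robert's classification \cite{Robert:2010qy}, one obtains a simple inductive limit $\mathcal{C}$ of Razak blocks and a nondegenerate embedding $\iota\colon\mathcal{C}\to B$ realizing a compatible slice of the tracial cone and Cuntz semigroup of $B$ (with algebraic simplicity, pureness, and stable rank one of $B$ delivering the lifting). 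Both $\varphi$ and $\psi$ then factor through $\mathcal{C}$ up to approximate unitary equivalence by \cite{Robert:2010qy}, and a further application of semiprojectivity reduces to the case where $\varphi,\psi$ both take values in a single Razak block $A_{n',k'}$. After replacing $\varphi,\psi$ up to small unitary perturbation by diagonal $^*$-monomorphisms (a Razak analog of \cite[Proposition 4.7]{Masumoto:2017wx} available via \cite[Lemma 4.1, Proposition 5.1]{Jacelon:2021uc} and the generic construction of $\mathcal{W}$ recalled above), Proposition~\ref{prop:razak} then yields $d_\mathcal{U}=d_\partial=d_W$, closing the chain.

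The main obstacle is the construction of the Razak-block model $\mathcal{C}\hookrightarrow B$ and the coherent factorization of $\varphi,\psi$ through it. The classification ingredients are in place, but one must ensure that not merely the tracial cone of $B$ but the specific pushforwards $\tau\circ\varphi$ and $\tau\circ\psi$ (together with the Cuntz classes realized in their images) are captured inside $\mathcal{C}$, and that nondegeneracy is preserved at each stage so that the diagonalization and the final application of Proposition~\ref{prop:razak} remain valid in the non-unital Razak setting.
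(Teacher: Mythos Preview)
Your outline follows the same template as the paper, and the chain of inequalities together with the reduction to a single Razak block and the appeal to Proposition~\ref{prop:razak} are all correct. Two points, however, deserve sharpening.

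First, the model. The paper does not build a simple Razak-block limit $\mathcal{C}$ with the tracial cone of $B$, and your citation of \cite{Jacelon:2010fj} only supplies the monotracial $\mathcal{W}$. Instead, the paper factors $\varphi$ and $\psi$ (via \cite{Robert:2010qy}) through $\mathcal{W}\otimes C$, where $C=\varinjlim(C_i,\beta_i)$ is a unital AF algebra with $T(C)\cong T(B)$. After semiprojectivity, the target is a finite stage $A'=A_i\otimes(M_{N_1}\oplus\cdots\oplus M_{N_m})$, a \emph{direct sum} of Razak blocks rather than a single one, and Proposition~\ref{prop:razak} is applied componentwise. This tensor model is both easier to produce and cleaner to analyse: the AF factor carries the tracial simplex, while $\mathcal{W}$ supplies the projectionless Razak structure.

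Second, the diagonalization step is the actual content of the proof, and your reference to a ``Razak analog of \cite[Proposition~4.7]{Masumoto:2017wx}'' does not exist ready-made. The paper's mechanism is different: it invokes Razak's uniqueness theorem \cite[Theorem~4.1]{Razak:2002kq} (see also \cite[Theorem~4.2]{Tsang:2003qy}), which provides finite test sets $G,H\subseteq A_+^1$ such that $^*$-homomorphisms into $A'$ agreeing tracially on $G$ to within $\delta$, and whose tracial infima over $H$ exceed $\delta$, are unitarily conjugate on $\mathcal{L}$ up to $\varepsilon$. The universality of the generic construction of $\mathcal{W}$ in \cite{Jacelon:2021uc} then supplies diagonal embeddings $\varphi_j'$ into each $A_i\otimes M_{N_j}$ with $\lambda\circ\varphi_j'\in T_g(A_{n,k})$, and these are matched tracially to the given $\varphi_j$ by precomposing with the transport homeomorphisms of Proposition~\ref{prop:wasserstein}. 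The infimum condition on $H$ is delicate in the nonunital setting and is arranged by moving far enough along the inductive sequence; this is precisely the ``obstacle'' you flag in your final paragraph, and it is where nondegeneracy is used.
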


\begin{proof}
It suffices to consider the case where $A=A_{n,k}$. Let $\varphi,\psi\colon A\to B$ be nondegenerate, and let $\varepsilon>0$. We again adapt the proof of \cite[Theorem 4.1]{Jacelon:2014aa}, and assume using  \cite{Robert:2010qy} that, up to approximate unitary equivalence, $\varphi$ and $\psi$ map into $\mathcal{W}\otimes C$, where $C=\varinjlim(C_i,\beta_i)$ is a unital AF algebra with $T(C)\cong T(B)$. By semiprojectivity, we may further assume that $\varphi$ and $\psi$ in fact map into some finite stage $A'=A_i \otimes (M_{N_1}\ \oplus \cdots \oplus M_{N_m})$. For $1\le j\le m$, let $\varphi_j$ and $\psi_j$ denote the components of $\varphi$ and $\psi$ into $A_i \otimes M_{N_j}$.

To apply Proposition~\ref{prop:razak}, we need each $\varphi_j$ and $\psi_j$ to be diagonal, which we can accomplish as follows. By \cite[Theorem 4.1]{Razak:2002kq} (see also \cite[Theorem 4.2]{Tsang:2003qy}), there are finite sets $G,H\subseteq A_+^1$ (independent of $A'$) such that $^*$-homomorphisms $\varphi,\varphi'$ from $A$ to $A'$ whose infima over $H$ on $T(A')$ are both larger than some $\delta>0$, and that tracially agree on $G$ up to $\delta$, are unitarily conjugate on $\mathcal{L}$ up to $\varepsilon$. By having moved far enough down the sequence $\varinjlim(A_i\otimes C_i,\alpha_i\otimes\beta_i)$, we can ensure that the infimum condition holds for $\varphi$ for some $\delta_1$. By universality of the construction of $\mathcal{W}$ in \cite{Jacelon:2021uc}, for all sufficiently large $i$, there are diagonal embeddings $\varphi_j'$ of $A_{n,k}$ into $A_i\otimes M_{N_j}$ for $1\le j\le m$ such that $\lambda\circ\varphi'_j\in T_g(A_{n,k})$ for every $j$ (as in \cite[Proposition 3.5]{Jacelon:2021uc}) and such that the infimum condition also holds for $\varphi'=\varphi'_1\oplus\dots\oplus\varphi'_m$ and some $\delta_2$. We set $\delta=\min\{\delta_1,\delta_2\}$, and again move far enough down the sequence (which does not affect the validity of the infimum condition) to ensure that
\begin{equation} \label{eqn:embedding}
\max_{1\le j\le m}\sup_{f\in G}\,\sup_{\tau,\tau'\in T(A_i \otimes M_{N_j})}|\tau(\varphi'_j(f))-\tau'(\varphi'_j(f))| < \frac{\delta}{4}
\end{equation}
and the same for $\varphi$. Since nondegenerate maps preserve tracial states, we may also assume that
\[
\max_{1\le j\le m}\frac{1}{\|\varphi_j^*\lambda\|}-1 < \frac{\delta}{4}.
\]
For each $1\le j\le m$, choose $\sigma_j\in T_{g}(A_{n,k})$ such that
\[
\sup_{f\in G}\left|\frac{\varphi_j^*\lambda}{\|\varphi_j^*\lambda\|}(f)-\sigma_j(f)\right| < \frac{\delta}{4}.
\]
By precomposing with automorphisms of $A_{n,k}$ induced by the transport maps of Proposition~\ref{prop:wasserstein}, we may assume that the maps $\varphi'_j$ in fact satisfy $\lambda\circ\varphi'_j=\sigma_j$. Then, for $1\le j\le m$, $f\in G$ and $\tau\in T(A_i \otimes M_{N_j})$,
\begin{align*}
|\tau(\varphi'_j(f))-\tau(\varphi_j(f))| &< |\lambda(\varphi'_j(f))-\lambda(\varphi_j(f))| + \frac{\delta}{2}\\
&= |\sigma_j(f)-\lambda(\varphi_j(f))| + \frac{\delta}{2}\\
&\le \left|\sigma_j(f) - \frac{\varphi_j^*\lambda}{\|\varphi_j^*\lambda\|}(f)\right| + \left(\frac{1}{\|\varphi_j^*\lambda\|}-1\right)\varphi_j^*\lambda(f) + \frac{\delta}{2}\\
&< \delta.
\end{align*}
Hence, $\varphi$ is unitarily conjugate to $\varphi'$ on $\mathcal{L}$ up to $\varepsilon$. We construct $\psi'\colon A \to A'$ similarly. The result $d_\mathcal{U}(\varphi,\psi)=d_W(\varphi,\psi)$ now follows from Proposition~\ref{prop:razak}. Finally, strict comparison of positive elements in $B$ implies that $d_W(\varphi,\psi)=W_\infty(\varphi,\psi)$ (see Remark~\ref{remark:lipschitz}).
\end{proof}

\subsection{Examples} \label{section:examples}

We conclude by providing some examples of direct limit domains to which the results of \S\ref{section:onedim} apply. Each example is the limit of a $1$-Lipschitz system in the sense of Definition~\ref{definition:limit}.

\subsubsection{$\js$ and $\mathcal{W}$} These algebras are constructed in \cite{Jiang:1999hb} and \cite{Jacelon:2010fj} respectively as limits of $1$-Lipschitz systems. However, our results are vacuous for these domains: by \cite{Robert:2010qy}, any two nondegenerate $^*$-homomorphisms from $\js$ or $\mathcal{W}$ to one of our specified codomains are approximately unitarily equivalent.

\subsubsection{Generalized prime dimension drop algebras} The algebras $Z_{\mathfrak{p},\mathfrak{q}}$ are defined in the same way as dimension drop algebras, except that $\mathfrak{p}$ and $\mathfrak{q}$ are allowed to be (coprime) supernatural numbers. It is explained in \cite[\S3]{Rordam:2009qy} how to view these as limits of $1$-Lipschitz systems of prime dimension drop algebras (and less trivially, how to view $\js$ as the limit of a stationary system associated to a trace collapsing endomorphism of a fixed $Z_{\mathfrak{p},\mathfrak{q}}$).

\subsubsection{Mapping tori} \sloppy This example is due to Leonel Robert. Fix a natural number $n\ge2$, and let $\gamma$ be an endomorphism of the UHF algebra $M_{(n-1)^\infty n^\infty}$ whose action on $K_0$ is multiplication by $\frac{n-1}{n}$. Viewing $M_{(n-1)^\infty n^\infty}$ as $\bigotimes M_{(n-1)n}$, we can think of $\gamma$ as the shift map $\gamma(a)=p\otimes a$ for elementary tensors $a$, where $p\in M_{n(n-1)}$ is a projection of rank $(n-1)^2$ (so that $\tr_n(p)=\frac{n-1}{n}$). Let $A$ be the mapping torus
\[
A = \{f\in C([0,1],M_{(n-1)^\infty n^\infty}) \mid f(1) = \gamma(f(0))\}.
\]
Then, $A$ is the limit of the $1$-Lipschitz system $A_{n,1} \to A_{n,(n-1)n} \to A_{n,(n-1)^2n^2} \to \dots$, where each connecting map is a suitable unitary conjugate of $f \mapsto \diag(\underbrace{f}_{(n-1)n})$.

The stabilized version of this construction (in which $\gamma$ is extended to an automorphism of $M_{(n-1)^\infty n^\infty}\otimes\mathbb{K}$) appears in \cite{Kishimoto:1996yu}. The algebra $\mathcal{W}\otimes\mathbb{K}$ arises as an inductive limit of such mapping tori (see \cite[Theorem 2.4]{Kishimoto:1996yu}; that one obtains $\mathcal{W}\otimes\mathbb{K}$ is a consequence of, for example, Razak's classification \cite{Razak:2002kq}). \fussy

\subsubsection{A simple direct limit with nontrivial trace space} Here is how one can obtain a simple, unital $\cs$-algebra $A$ with $\partial_e(T(A))\cong[0,1]$ as the limit of a $1$-Lipschitz system of prime dimension drop algebras (based on work of Leonel Robert). We adapt the construction of \cite[Proposition 2.5]{Jiang:1999hb} (using the same notation) so that at the $m$th step, the eigenvalue maps $\xi_i$ are all of the form $\xi_i(t)=t$ or $\xi_i=c_m$ for some constant $c_m\in[0,1]$ or $\xi_i(t)=\max\{c_m,t\}$. In other words, with suitable multiplicities, the eigenvalue pattern will look like this:

\begin{center}
\begin{picture}(70,70)
\put(0,0){\vector(1,0){70}}
\put(0,0){\vector(0,1){70}}
\put(0,0){\line(1,1){60}}
\put(0,30){\line(1,0){60}}
\put(-7,30){\makebox(0,0){$c_m$}}
\end{picture}
\end{center}

By allowing the constants $c_m$ to vary densely throughout $[0,1]$ as the sequence progresses (to ensure simplicity) and making the proportion of identity maps at each step sufficiently large, we will have obtained a system with the desired properties. `Sufficiently large' means large enough to secure an approximate intertwining
\[
\begin{tikzcd}
\aff(T(Z_{p_1,q_1})) \arrow[r] \arrow[d,equal] & \aff(T(Z_{p_2,q_2})) \arrow[r] \arrow[d,equal] & \dots \arrow[r] & \aff(T(A)) \arrow[d,dashed]\\
C_\mathbb{R}([0,1]) \arrow[r,"\id"] & C_\mathbb{R}([0,1]) \arrow[r,"\id"] & \dots \arrow[r,"\id"] & C_\mathbb{R}([0,1])
\end{tikzcd}
\]
(where we identify $C_\mathbb{R}([0,1])$ with $\aff(T(Z_{p_m,q_m}))$ via the embedding of $C([0,1])$ into the centre of $Z_{p_m,q_m}$, as in \cite[Lemma 2.4]{Jiang:1999hb}). If for each $m\in\mathbb{N}$ the proportion $\frac{1}{k}\#\{i\mid\xi_i=\id\}$ (where $k$ is the total number of eigenvalue maps) is at least $\frac{1}{m^2}$, so that the $m$th square commutes on the unit ball up to $\frac{1}{m^2}$, then by \cite[Lemma 3.4]{Thomsen:1994qy}, there is an induced isomorphism between $\aff(T(A))$ and $C_\mathbb{R}([0,1])$.

Given coprime $p_m=p<q=q_m$, let $k_0=p^n$ and $k_1=q^n$, where $n\ge2m$ is a sufficiently large natural number to ensure that $p^n>n^2q$ and $q^n>n^2p$. Set $p_{m+1}=k_0p_m=p^{n+1}$, $q_{m+1}=k_1q_m=q^{n+1}$ and $k=k_0k_1=p^nq^n$. In \cite[Proposition 2.5]{Jiang:1999hb}, the numbers of each type of eigenvalue map are determined by the values of $r_0=[k]_{q_{m+1}}$ and $r_1=[k]_{p_{m+1}}$. In particular, $k-r_0$ and $k-r_1$ are the numbers of occurrences of $f\left(\frac{1}{2}\right)$ in each block of $\varphi_m(f)(t)$ at $t=0$ and $t=1$ respectively. In our modified construction, we replace $\frac{1}{2}$ by $c_m$, and whittle these numbers down to the bare minimum, that is, we take $r_0=k-q_{m+1}=q^n(p^n-q)$ and $r_1=k-p_{m+1}=p^n(q^n-p)$. In other words, we demand exactly one $f(c_m)$ in each block. We set
\[
 \xi_i(t)=
 \begin{cases}
 t & 1\le i\le k-q_{m+1}\\
 c_m & k-q_{m+1}< i \le k-q_{m+1}+p_{m+1}\\
 \max\{c_m,t\} & k-q_{m+1}+p_{m+1} < i\le k.
 \end{cases}
\]
Since
\[
\frac{k-q_{m+1}}{k} = 1-\frac{q}{p^n} > 1-\frac{1}{n^2} > 1-\frac{1}{m^2},
\]
we have accomplished our task.

\section{Dynamics} \label{section:dynamics}

Finally, we adapt the construction of the last example of \S~\ref{section:examples} to prove Theorem~\ref{thm2}, restated here for the reader's convenience.

\begin{theorem} \label{thm:dynamics}
There exists a simple, separable, unital, nuclear, $\js$-stable, projectionless $\cs$-algebra $A$ that has trivial tracial pairing and satisfies the UCT, such that $\partial_e(T(A))\cong S^1$ and the set $\{a \mid \hat a|_{S^1} \text{ is Lipschitz}\}$ (with respect to the geodesic metric $d$ on $S^1$) of tracially Lipschitz elements is dense in $A_{sa}$. Moreover, there is a trace $\tau_0\in T(A)$ and a $\tau_0$-preserving endomorphism $\theta$ of $A$ such that, for every tracially Lipschitz $a$ with $\tau_0(a)=0$ and $\sigma_{\hat{a}}^2>0$ on $\partial_e(T(A))$, and almost every $\tau\in\partial_e(T(A))$, the sequence of weighted averages (\ref{eqn:weighted}) of the point masses
\[
\left\{\delta_{\frac{1}{\sqrt{k}}\tau(a+\theta(a)+\dots+\theta^{k-1}(a))}\right\}_{k=1}^n
\]
is $w^*$-convergent to the normal distribution $\mathcal{N}_{0,\sigma_{\hat{a}}^2}$.
\end{theorem}

\begin{proof}
Let $(p_m,q_m)_{m\in\nn}$ be the above sequence of coprime integers, and let $\{c_m\}_{m\in\nn}$ be a dense subset of $S^1\setminus\{-1,1\}$. For fixed $m\in\nn$, define
\[
A_m = \{f\in C(S^1,M_{p_m}\otimes M_{q_m}) \mid f(1) \in M_{p_m}\otimes 1_{q_m},\,f(-1) \in 1_{p_m}\otimes M_{q_m} \}.
\]
This $\cs$-algebra is a one-dimensional NCCW complex that is supported on two lines (the two semicircles joining $1$ and $-1$). It satisfies the UCT, is separable and nuclear, and has trivial tracial pairing (by connectedness of $S^1$, the trace of any projection is the value of its constant rank). In fact, $A_m$ has no nontrivial projections, and it is not hard to compute from the six-term exact sequence that
\[
(K_0(A_m),K_0(A_m)_+,[1_{A_m}],K_1(A_m)) = (\zz,\nn,1,\zz).
\]
Let $\varphi_m\colon A_m\to A_{m+1}$ be a diagonal $^*$-homomorphism (in the sense of Definition~\ref{definition:diagonal}, with $S^1$ in place of $[0,1]$ and with no ordering) whose eigenvalue maps are the functions $\xi_1,\dots,\xi_k\colon S^1\to S^1$, where $k=\frac{p_{m+1}q_{m+1}}{p_mq_m}$ and
\[
 \xi_i(z)=
 \begin{cases}
 z &  1\le i\le k-q_{m+1}\\
 c_m &  k-q_{m+1}< i \le k-q_{m+1}+p_{m+1}\\
 \gamma_m\left(\max\left\{\gamma_m^{-1}(c_m),\gamma_m^{-1}(\pi_m(z))\right\}\right) & k-q_{m+1}+p_{m+1} < i\le k.
 \end{cases}
\]
Here, $\gamma_m\colon[0,1]\to S^1$, $t\mapsto\exp(\pm i\pi t)$, parameterizes either the upper or lower semicircle, whichever contains $c_m$, and $\pi_m$ projects $S^1$ onto $\gamma_m([0,1])$.

Set $A=\varinjlim(A_m,\varphi_m)$. In the notation of \cite{Lin:2011ub}, $A$ is a simple, unital $\rm{A}\mathbb{T}\rm{D}$-algebra. It is $\mathcal{Z}$-stable (see \cite[Theorem 4.2]{Lin:2011ub} or \cite[Corollary 7.5]{Winter:2012pi}), hence classifiable. Just as in \S~\ref{section:examples}, $\partial_e(T(A))\cong \partial_e(T(A_m)) \cong S^1$. Let $(\mathcal{L}_m)_{m\in\nn}$ be the Lipschitz system (in the sense of Definition~\ref{definition:limit}) associated with
\[
\mathcal{L}_1 = \lip(A_1) = \{f \in (A_1^1)_{sa} \mid \forall\,x,y\in S^1 \, (\|f(x)-f(y)\| \le d(x,y))\}
\]
and $(\varphi_m)_{m\in\nn}$, and let $\mathcal{L}=\bigcup\limits_{m\in\nn}\varphi_{m,\infty}(\mathcal{L}_m)\subseteq A$.  Let $W_1$ be the metric on $X=\partial_e(T(A))$ induced by $\mathcal{L}$ in the sense described in Remark~\ref{remark:lipschitz}(\ref{lip3}), that is,
\[
W_1(\tau_1,\tau_2) = \sup_{a\in\mathcal{L}, h\in\lip(C_0(0,1])}|\tau_1(h(a))-\tau_2(h(a))|.
\]
By construction, $W_1$ is in fact the geodesic metric $d$ on $S^1\cong X$. For every $m\in\nn$, elements $f$ in the set
\[
K_m = \{f\in (A_m)_{sa} \mid \exists M>0 \, \forall\,x,y\in S^1 \, (\|f(x)-f(y)\| \le Md(x,y))\}
\]
have the property that $\widehat{\varphi_{m,\infty}(f)}$ is Lipschitz on $(X,W_1)$. By Stone--Weierstrass,  these elements are dense in $(A_m)_{sa}$, hence $\bigcup\limits_{m\in\nn}\varphi_{m,\infty}(K_m)$ is dense in $A_{sa}$. 

It remains to show the existence of the endomorphism $\theta$. By \cite[Theorem 6.3]{Lin:2011ub}, for any group homomorphism $\kappa_1\colon K_1(A)\to K_1(A)$ (the zero homomorphism will do) and continuous affine map $h\colon T(A)\to T(A)$, there is a unital $^*$-homomorphism $\theta\colon A\to A$ such that $K_1(\theta)=\kappa_1$ and $T(\theta)=h$. Note moreover that any continuous $h\colon X\to X$ can be extended to a continuous affine map $T(A)\to T(A)$ by pushing forward representing measures: for every $\tau\in T(A)$, there is a unique Borel probability measure $\mu_\tau$ supported on $X$ such that $f(\tau) = \int_{X}fd\mu_\tau$ for every $f\in\aff(T(A))$ (by definition of a metrizable Choquet simplex), and we define the extension by
\[
h(\tau)(a) = \int_X\hat a\circ h\, d\mu_\tau \quad\text{ for } a\in A_{sa}.
\]
As for the choice of $h\colon S^1\to S^1$, any strongly chaotic circle map known to satisfy the almost-sure CLT will do. Let us take the Pomeau--Manneville map with parameter $\alpha\in\left(0,1/2\right)$ (see for example \cite[\S3.5]{Chazottes:2015ti}), viewed as a map of the interval $[0,1]$ with its endpoints identified:
\[
 h(t)=
 \begin{cases}
 t+2^\alpha t^{1+\alpha} & \text{ if }\ 0\le t < \frac{1}{2}\\
 2t-1 & \text{ if }\ \frac{1}{2} \le t\le 1.
 \end{cases}
\]
There is a unique ergodic $h$-invariant probability measure $\mu_0$ which is equivalent to Lebesgue measure. The system $(S^1,h,\mu_0)$ satisfies the CLT \cite[Theorem 6]{Young:1999vu} and moreover the almost-sure CLT \cite[Theorem 18]{Chazottes:2015ti}: for any Lipschitz observable $f\colon S^1\to\mathbb{R}$, if $\int fd\mu_0=0$, and if the variance $\sigma_f^2$ (\ref{eqn:variance}) of $f$ is nonzero (which is the typical case), then for $\mu_0$-a.e.\ $t\in S^1$, the sequence of weighted averages
\begin{equation} \label{eqn:weighted}
T_n(t)=\frac{1}{D_n}\sum_{k=1}^n\frac{1}{k}\delta_{S_kf(t)/\sqrt{k}},
\end{equation}
where $S_kf(t)=\sum_{i=0}^{k-1}f(h^it)$ and $D_n=\sum_{k=1}^n\frac{1}{k}$, is $w^*$-convergent to $\mathcal{N}_{0,\sigma_f^2}$. 

The endomorphism $\theta$ satisfies the tracial version. Regarding $S^1$ as $\partial_e(T(A))$, $\mu_0$ is the unique representing measure of a trace $\tau_0=\tau_{\mu_0}\in T(A)$, namely, $\tau_0(a)=\int_{S^1}\hat{a}\,d\mu_0$ for $a\in A_{sa}$. If $a$ is tracially Lipschitz, with $\tau_0(a)=0$ and $\sigma_{\hat{a}}^2>0$ on $\partial_e(T(A))$, then for almost every $\tau\in\partial_e(T(A))$, the sequence of weighted averages (\ref{eqn:weighted}) of the point masses $\left\{\delta_{\frac{1}{\sqrt{k}}\tau(a+\theta(a)+\dots+\theta^{k-1}(a))}\right\}_{k=1}^n$ is $w^*$-convergent to $\mathcal{N}_{0,\sigma_{\hat{a}}^2}$.
\end{proof}

\end{document}